\numberwithin{equation}{section}             
\theoremstyle{definition}                          
\newtheorem{thm}{Theorem}[section]     
\newtheorem{prop}[thm]{Proposition}      
\newtheorem{cor}[thm]{Corollary}            
\newtheorem{lem}[thm]{Lemma}             
\newtheorem{hypothesis}[thm]{Hypothesis}
\newtheorem{property}[thm]{Property}
\theoremstyle{definition}               
\newtheorem{dfn}[thm]{Definition}
\newtheorem{rem}[thm]{Remark}          
\newtheoremstyle{prova}
{3pt}
{3pt}
{}
{}
{\textbf}
{.}
{\newline}
{}
\theoremstyle{prova}
\def\R{\mathbb R}
\def\N{\mathbb N}
\def\E{\mathbb E}
\def\P{\mathbb P}
\def\sha{{\cal A}}
\def\shb{{\cal B}}
\def\shf{{\cal F}}
\def\shl{{\cal L}}
\def\shz{{\cal Z}}
\def\1{\mathds{1}}
\def\ra{\rightarrow}
\def\be{\begin{equation}}
\def\ee{\end{equation}}
\def\bs{\begin{split}}
\def\es{\end{split}}
\author[1]{Carlo Ciccarella}
\author[2]{Francesco Russo}
\affil[1]{EPFL Lausanne\\
  Institut de Math\'ematiques\\ Station 8\\ CH-1015 Lausanne
  (Switzerland)
}
\affil[2]{ENSTA Paris\\
  Institut Polytechnique de Paris\\
Unit\'e de Math\'ematiques appliqu\'ees\\
F-91120 Palaiseau (France)
}
\begin{document}

\date{September 15th, 2025}

\title{Verification theorem related to a zero sum
    stochastic differential game, based on a chain rule
  for non-smooth functions.}


\maketitle

\begin{abstract}
  In the framework of stochastic zero-sum differential games,
 we establish a verification theorem, inspired by those existing in stochastic
control, to provide sufficient conditions for a pair of feedback controls to form a Nash equilibrium.
Suppose the validity of 
 the classical Isaacs' condition 
and the existence of a (what is termed)   quasi-strong solution to the Bellman-Isaacs (BI)
equations. If the diffusion coefficient of the
state equation is non-degenerate,
we are able to show the existence of a saddle point
constituted by a couple of feedback controls
that achieve the value of the game:
moreover, the latter is equal to the (necessarily unique) solution of the BI equations.
A suitable generalization is available when the diffusion is possibly degenerate.
Similarly we have also improved a well-known verification theorem in
stochastic control theory.
The techniques of stochastic calculus via regularization we use, in particular 
specific chain rules,  are borrowed from a companion paper of the authors.

\end{abstract}

\medskip

{\bf Key words and phrases:}
Stochastic differential games; verification theorem; stochastic control.

{\bf Mathematics Subject Classification 2020:}
  91A15; 49N70;
  60H10; 60H30.

\section{Introduction and problem formulation}\label{Intro}

The primary aim of this paper is to establish a verification theorem for a zero-sum stochastic differential game (SDG).
By verification theorem in the context of SDG (resp. stochastic control),
one generally intends a theorem
that provides
  sufficient conditions for a candidate value function, e.g.
  the solution to the Bellman-Isaacs equation (resp. Hamilton-Jacobi-Bellman equation), to be equal to the value of the game
(resp. the optimization value), see e.g.  Definition \ref{Values} (resp. \eqref{eq:ValueContr}).
Additionally, in the case of SDG
 one provides sufficient conditions for a pair of feedback controls to form a Nash equilibrium,
 which corresponds to
 a saddle point for the payoff functional, when the SDG has zero sum.

The game we are interested in, is defined precisely as below.
We will deal with a fixed horizon problem so that we fix $T\in ]0, \infty[$,
a finite dimensional Hilbert space, say $\R^d$ that will be the state space, a finite dimensional Hilbert space, say $\R^m$ (the noise space), two compacts
sets $U_1, U_2 \subseteq \R^k$ (the control spaces).
We consider an initial time and state $(t,x)\in [0,T]\times \R^d$.
Let us fix a stochastic basis
$(\Omega, \shf, (\shf_s) _{s\in [t,T]}, \P) $ satisfying the usual conditions.
$W$ is an $(\shf_s)_{s\in [t,T]}$ $d$-dimensional Brownian motion.

In this work, we adopt a feedback (closed loop) control formulation:
for $i = \{1,2\}$,
$\shz_i(t)$ will be the space of the so called \textit{feedback controls}, that are Borel functions
$z_i:[t,T] \times \R^d \ra U_i$,
corresponding to the controls employed Player $i$.

The state process equation is
\begin{equation}\label{state1}
\left\{
\begin{array}{l l}
dy(s)=  f(s,y(s),z_1(s,y(s)),z_2(s,y(s))) ds + \sigma (s,y(s)) dW_s, \\
 y(t)= x,
\end{array}\right. 
\end{equation}
where the coefficients are defined as
 \begin{align}\label{coef_space}
&f : [0,T] \times \R^d \times U_1 \times U_2 \ra \R^d, \\
&\sigma: [0,T] \times \R^d \ra L(\R^m,\R^d). \nonumber
 \end{align}
$L(\R^m,\R^d)$ will be the space of  $d \times m$ real-valued matrices.
  Equation \eqref{state1} may not have a solution for some
  $z_1, z_2$.
  We state below Hypothesis \ref{hyp:ex-uniq}
  under which  this will not happen and
  Proposition \ref{baseHypFeed} provides tools to verify that hypothesis.
  
  As anticipated earlier, as far as the SDG formulation is concerned,
  we adopt a feedback control formulation.
 While in control theory open loop controls are often employed, in the game theory setting they do not account for the actions of the opponent during the game. As a consequence, they fail to capture the strategic interdependence that characterizes dynamic games.
Feedback controls were employed for SDG for the first time in \cite{FriedGames};
other more recent approaches were performed in the Hamad\`ene-Lepeltier contributions
\cite{Hamadene1} and \cite{Hamadene3}.
An alternative approach is the control versus strategy framework, in which one player selects a control (typically open loop),
 and the other selects a strategy, defined as a state-feedback mapping, see \cite{FS}, \cite{ElKal1972}.
 This formulation  introduces however an asymmetry between players.

 Given $z_i \in \shz_i(t), i = 1,2,$ and a unique strong solution $y$  to \eqref{state1}, when it exists, 
the payoff function may not always be well-defined. If it exists, it is defined by
\begin{equation}\label{J0}
  J(t,x;z_1, z_2)= \E \left \{ \int_t^T l(s,y(s), z_1(s,y(s)), z_2(s,y(s))) ds
    + g(y(T)) \right \},
\end{equation}
where $l$ (resp. $g$) is the running (resp. final) cost.
More precise assumptions on $f$, $\sigma$, $l$,
will be given in Section \ref{Section2}.
Hypothesis \ref{Path0}  provides sufficient conditions for the integral
$$\int_t^T l(s,y(s), z_1(s,y(s)), z_2(s,y(s))) ds,$$
to be always well-defined but, in general, its expectation  may not exist.
Therefore, we introduce the two auxiliary payoff functions
\begin{equation}\label{Jpm1}
J^{\pm} (t,x;z_1, z_2) = \left\{
\begin{array}{l l}
J(t,x;z_1, z_2) \qquad &\text{if well-defined},\\
\pm\infty \qquad &\text{otherwise.}
\end{array}\right.
\end{equation}
There are two notions of value of the game: the upper (resp. lower)
value related to the Player 1 (resp. Player 2) who is supposed to maximize (resp. minimize)
on feedback controls for which the integral and expectation in \eqref{J0} exist,
see Definition \ref{Values} below.

\begin{dfn}\label{Values}
  \begin{enumerate}
\item The \textbf{upper value} $V^+$ and \textbf{lower value} $V^-$ of the game (SDG) \eqref{state1} and \eqref{J0} with initial data $(t,x)$ are given by
\begin{align*}
  V^+(t,x) &= \inf_{z_2 \in \shz_2(t)} \sup_{z_1 \in \shz_1(t)} J^+(t,x; z_1, z_2 )
  \\
V^-(t,x) &=  \sup_{z_1 \in \shz_1(t)} \inf_{z_2 \in \shz_2(t)} J^-(t,x; z_1, z_2 ).
\end{align*}
\item If $V^+(t,x) = V^-(t,x)$, then we say that the SDG \eqref{state1} and \eqref{J0} has a \textit{value} and we call 
$V(t,x) = V^+(t,x) = V^-(t,x)$ the \textbf{value of the game}.
\end{enumerate}
\end{dfn}
\begin{rem}\label{RInfSup}
  Since $\sup \inf \le \inf \sup$ and
   $J^-(t,x; z_1, z_2 ) \le  J^+(t,x; z_1, z_2 ),$ 
  we observe that, in general $V^-(t,x) \le V^+(t,x)$.
\end{rem}

The next definition is essentially the definition of a Nash equilibrium for the game.
For nonzero-sum games, a notion of Nash equilibrium 
has been introduced for instance
in  Section 1 in \cite{Hamadene3} or (2.8) in \cite{FriedGames}.
\begin{dfn}\label{Saddle}
  Let $t \in [0,T]$. A couple of controls $(z^\star_1,z^\star_2) \in \shz_1(t) \times \shz_2(t)$ for which, for all $s\in [t,T]$, $x\in \R^d$,
  $J(t,x; z^\star_1, z^\star_2)$ is well-defined, finite and
  \begin{equation}
    \label{saddleClass}
 J(t,x; z_1, z^\star_2) \leq J(t,x; z^\star_1, z^\star_2) \leq J(t,x; z^\star_1, z_2),
     \end{equation}
     for every  $z_i \in \shz_i(t), i = 1,2$.
In particular $J(t,x; z_1, z^\star_2) $ and  $J(t,x; z_1^\star, z_2) $ are well-defined.
     \end{dfn}
       Previous definition can be formulated equivalently as follows.
     \begin{rem} \label{RSaddle}
      $(z^\star_1,z^\star_2)$ is a saddle point if for every  $z_i \in \shz_i(t), i = 1,2$, we have
     \begin{align}
      & J^+(t,x; z^\star_1, z^\star_2) = J^-(t,x; z^\star_1, z^\star_2) = J(t,x; z^\star_1, z^\star_2),
      \\
      &  J^-(t,x; z_1, z^\star_2) \leq J(t,x; z^\star_1, z^\star_2) \leq J^-(t,x; z^\star_1, z_2), \\
&  J^+(t,x; z_1, z^\star_2) \leq J(t,x; z^\star_1, z^\star_2) \leq J^+(t,x; z^\star_1, z_2).
  \end{align}
  Indeed, if for some  $z_1$  (resp.  $z_2$), $J(t,x; z_1, z^\star_2)$ (resp. $J(t,x; z^\star_1, z_2)$)
  is not defined, then $(z^\star_1,z^\star_2)$ cannot be a saddle point.
\end{rem}

If there exists a saddle point $(z^\star_1,z^\star_2)$, then Proposition \ref{prop:verification}
implies that the game has a value. 
On the other hand, if the game has a value, this does not guarantee that there exists some $(z^\star_1,z^\star_2)$, which actually achieve it.
As we will explain later, in this work we provide natural sufficient conditions for the existence of such a couple.

The motivation behind the upper and lower value functions stems from the inherent ambiguity in defining the value of the game. This ambiguity arises  since the supremum (resp. infimum) is calculated prior to the infimum (resp. supremum) of the payoff function, denoted as \eqref{J0}. Such a process typically yields two distinct outcomes, precisely $V^+$ (resp. $V^-$). The concepts of upper and lower values were initially introduced in deterministic frameworks, as detailed in the works \cite{FlemingDet1}, \cite{FlemingDet2}, \cite{ElliottKalDet1}, and \cite{SST}.

In the literature, the PDEs  playing a similar role  to the Hamilton-Jacobi-Bellman equation (HJB)
in case of control theory, are the upper and lower Bellman-Isaacs equation (BI): they are defined below  in \eqref{HJB1} and \eqref{HJB2}. These equations are associated with the stochastic differential game \eqref{state1} and \eqref{J0} and they were first formally derived 
 in \cite{Isaacs}.
To express them, one first defines the current value Hamiltonian  $H_{CV}: [0,T] \times \R^d \times \R^d \times U_1 \times U_2 \ra \R $ as
\begin{equation} \label{Hext}
H_{CV}(s,x,p,u_1,u_2) = \langle f(s,x,u_1,u_2),p\rangle + l(s,x,u_1,u_2).
\end{equation}
Setting formally
\begin{align}\label{shl0}
\shl v(s,x) = \partial_s v(s,x) + \frac{1}{2} Tr [\sigma^\top(s,x) \partial_{xx}v(s,x) \sigma(s,x)],
\end{align}
we can write the Bellman-Isaacs equations associated with the problem \eqref{state1} and \eqref{J0} as
\begin{equation}\label{HJB1}
\left\{
\begin{array}{ll}
\shl v (s,x) + H^{+} (s,x,\partial_x v(s,x)) =0,\\
v(T,x)= g(x),
\end{array}\right.
\end{equation}
\begin{equation}\label{HJB2}
\left\{
\begin{array}{l l}
 \shl v (s,x) + H^{-} (s,x,\partial_x v(s,x)) =0,\\
v(T,x)= g(x),
\end{array}\right.
\end{equation}
where  $H^{-} , H^{+} : [0,T] \times \R^d \times \R^d
\ra \R $
are the so called {\it Hamiltonians} defined by
\begin{equation}\label{H-}
H^{-}(s,x,p)  = \sup_{u_1 \in U_1} \inf_{u_2 \in U_2} H_{CV}(s,x,p,u_1,u_2)  , 
\end{equation}
\begin{equation}\label{H+}
H^{+}(s,x,p)  = \inf_{u_2 \in U_2} \sup_{u_1 \in U_1}  H_{CV}(s,x,p,u_1,u_2) .
\end{equation} 
Equation \eqref{HJB1} (resp. \eqref{HJB2}) is usually called the \textit{upper} (resp.
\textit{lower})
\textit{Bellman-Isaacs (BI) equation}.

Many results proving the existence of a value of the game rely on a
so-called {\it Isaacs' condition}.
Among those,  \cite{FriedGames}, \cite{Car3},  \cite{Hamadene3} consider non zero-sum games,  while
\cite{FS}, \cite{Fleming2011}, \cite{Buckdahn2008}, \cite{CarRain2008}, \cite{BayYao2012}, \cite{Elliott1976}, \cite{Elliott1977}, \cite{Hamadene1}, \cite{KarHam2003}, \cite{PhaZha2018} are in the setting of  zero-sum games. Some important contributions which prove  existence of a value in a zero-sum game without assuming the Isaacs' condition are \cite{subbotin}, \cite{quincampoix} and \cite{sirbu}.
The usual formulation of the Isaacs' condition in the case of a zero-sum game is the one of
Definition \ref{Isaacs0} below.

\begin{dfn}\label{Isaacs0}
  The SDG \eqref{state1} and \eqref{J0} is said to fulfill the {\bf Isaacs' condition} if $H^{+}(s,x,p) =H^{-}(s,x,p)$
  for all $s \in [0,T]$, $x \in \R^d$ , $p \in \R^d$.
\end{dfn}
If Isaacs' condition holds, the couple of equations \eqref{HJB1} and \eqref{HJB2} collapse into only one equation.

There are four main approaches to zero-sum stochastic differential games, some of them mentioned earlier.

The first approach is concerned by verification theorems via a PDE-based approach, employing feedback controls, see
\cite{FriedGames},
which focuses on classical $C^{1,2}$-solutions to the Bellman-Isaacs equation and presents an approach closely aligned with ours.
It is proved that, if
the  Isaacs' condition is fulfilled,
then there is a pair of feedback controls
which  constitute  a saddle point of the game, as defined  in \eqref{saddleClass}.
   Furthermore, the paper extends the results to $N$-player games and games with incomplete information.

In the second one, a game is formulated in a control against strategy setting,
in the sense that player acting first chooses a control and
the other responds with a strategy.
The player choosing a strategy has information also on the control used by his opponent.
In that context, the definition of lower (resp. upper) value is similar to
the one in Definition \ref{Values},
where the supremum (resp. infimum) over controls is replaced by a supremum (resp. infimum) over
strategies.
This formulation was  introduced in the deterministic framework in
\cite{ElKal1972}.
The earliest work which establishes
the existence of a value in a stochastic differential game using this formulation appears to be \cite{FS}. Subsequently, the distinction between controls and strategies (appearing in \cite{FS}) is mitigated in \cite{Fleming2011}.
In \cite{FS} and \cite{Fleming2011}, it is proved that 
the upper (resp. lower) value function (see Definition \ref{Values}) is a viscosity solution to the upper (resp. lower) Bellman-Isaacs equation, see \eqref{HJB1} (resp. \eqref{HJB2}).
In particular, if the  Isaacs' condition holds, see Definition \ref{Isaacs0},
then the game has a value, in the sense that lower and upper value coincide,
 whenever those equations are well-posed.
 In \cite{Buckdahn2008}, the authors generalize the results of \cite{FS} expressing the cost functional through the solutions of a backward SDE (BSDE) in the sense of Pardoux and Peng \cite{pardouxpeng}.
The authors also prove a dynamic programming principle for the upper and lower value functions of the game
 in a direct manner, without relying on the approximation argument used in \cite{FS}.
 In all the aforementioned contributions of this
 second vague, no verification theorem is used
 and the existence of the value of the game can be proved as viscosity solution
of a corresponding Bellman-Isaacs equations.

 The third stream of literature also  deals with  stochastic differential games in the
 setting ``strategy against control'',
but the formulation is weak, non-Markovian and
  the set of admissible controls and strategies are of feedback type.
  In \cite{DavisVaraiya1972}, \cite{Elliott1976}, \cite{Elliott1977} and  in \cite{DavisElliott1981} necessary and sufficient conditions are provided for the existence of a value.
  They use a principle of optimality for non-Markovian controlled processes proved in  \cite{DavisVaraiya1972}, Theorem 4.1. In this context, a verification theorem appears in \cite{DavisVaraiya1972},
  Theorem 7.2.
  
  The fourth (more recent) approach makes use, similarly to the present paper, of a setting ''control against control'' and is employed in  \cite{Hamadene1}, \cite{KarHam2003}, \cite{Hamadene3}, \cite{PhaZha2018}. The authors formulate and prove a verification theorem via
  BSDEs techniques, showing that a specific couple of feedback controls  $(z_1^\star,z_2^\star)$, verifying some inequality of saddle point type, constitute a Nash equilibrium for SDG. 

  We repeat that, in the second and third approach, the games are asymmetrically formulated, namely, the player who plays last, has information also  on the control used by her/his opponent.
  In the first and fourth approaches both players implement a
  ``control against control'' approach and they play simultaneously: nevertheless, the second and fourth formulations are reconciled in \cite{Rain4}.

Besides the Friedman's seminal paper \cite{FriedGames}, we are not aware of any contributions proving a verification theorem using a PDE argument
in the classical framework of SDG. Our work makes a  contribution to the literature by presenting a verification theorem that employs a PDE approach while assuming non-classical regularity ($C^{0,1}$) for the solution to the Bellman-Isaacs PDE.  Specifically, we suppose the existence of a (what we call) quasi-strong solution (see Definition \ref{strong1})  $v$ of the upper/lower Bellman-Isaacs equations and the validity of
  the Isaacs  condition, see Definition \ref{Isaacs0}.
  That notion of quasi-strong solution was introduced in \cite{CR1}, Section 3.3.
Under previous conditions, if there exist some maps $(z^\star_1, z^\star_2)$ fulfilling the measurable selection Hypothesis \ref{hyp_select},
then $v(t,x)$  is equal to value of the SDG (see Definition \ref{Values})
and
$(s,y) \mapsto (z^\star_1(s,y,\partial_x v(s,y), z^\star_2(s,y,\partial_x v(s,y))))$ is a saddle point,
see Definition \ref{Saddle}.

Other works, to the best of our knowledge, proving a verification theorem using other techniques, are  the already mentioned 
papers \cite{Hamadene1}, \cite{Hamadene3},
\cite{DavisVaraiya1972} together with \cite{HuangShi}, the latter in the different setting of Stackelberg games.
Moreover, verification theorems, in the different context of impulse games, are provided by \cite{AidBasei}, \cite{Campi} and references therein.  In these works, classical $C^2$-regularity of the solution to the Bellman-Isaacs equation is assumed. 

Concerning the hypotheses in our paper,
we insist first on the fact that
the Hamiltonian \eqref{Hext}, all the coefficients of the state equation \eqref{state1}, the running cost and the BI equations \eqref{HJB1} and \eqref{HJB2} are only supposed to be continuous with respect to the space variable and neither with respect to the time nor with respect to the control variables. This explains the motivation of working with quasi-strong solution of BI equations.
In particular,  we allow  changes of regimes in the dynamics of the state equation.
Indeed we are not aware of contributions in the literature  dealing with strong formulation of the differential games which do not require time continuity of the coefficients.

Our work significantly weakens the assumptions of \cite{FriedGames},
where the diffusion is assumed to be non-degenerate and continuous in
$s$, together with its spatial derivative. Moreover, in \cite{FriedGames} also the drift and the running cost, $l$ in \eqref{J0}, should be continuous with respect to all the entries.  We also overcome non-degeneracy with a condition of convergence on the derivatives of the value function, see Hypothesis \ref{3.7}, item 1. 
 
Second, the verification theorems proved in \cite{Hamadene1} and \cite{Hamadene3}   require  continuity on the dependence of the saddle point controls on the state and the derivative of the value function, a condition which does not intervene in our case, see Remark \ref{RVerifHamadene}.
 Additionally, they also assume non-degeneracy of the diffusion.


 Finally, we do not assume a priori the existence of the payoff functional $J$, defined in  \eqref{J0}, for our class of admissible controls.
 We will prove that, for the saddle point couple
 $(z_1^\star, z_2^\star)$,
 $J$ is automatically well-defined and integrable.

 Section  \ref{S5}  is devoted to stochastic control, where
 we formulate a slight generalization of (the verification) 
 Theorem 4.9 of \cite{rg1},
 making use of a quasi-strong solution of an Hamilton-Jacobi-Bellman (HJB) equation. 
The context of our control problem is different from the one of SDG, since the  set of controls is constituted by (open loop)
progressively measurable stochastic processes and not functions representing feedback controls.
Here again, all the coefficients of the state equation \eqref{stateC}, of the payoff functional \eqref{TJC}, and of the HJB equation \eqref{HJBC} are
only supposed to be continuous with respect to the space variable and not with respect to the time and control variables.
Remark that  in \cite{rg1} they were required to be continuous with respect to all the entries. 
Also, in  Theorem 4.9 and Lemma 4.10 of \cite{rg1} the authors supposed the lower integrability condition in the running cost $l$, i.e.
  \begin{equation*}
 \E \left( \int_t^T  l^-(s,y(s,t,x,z), z(s)) ds \right) > - \infty,
 \end{equation*}
  where $l^- \doteqdot (l \wedge 0 )$.
This is not anymore required, see Hypothesis \ref{Path0}.
Finally the terminal cost $g$  is not required  anymore to be differentiable.
We suppose the existence of quasi-strong solutions of the HJB equations \eqref{HJBC}.
That notion is weaker than the corresponding concept of strong solution used in \cite{rg1} and \cite{rg2}.

Our applications to game theory and stochastic control were made possible via generalization of some stochastic calculus via regularization tools developed in \cite{CR1}, in particular from that paper, we recall below Theorem \ref{representation},
based on a Fukushima-Dirichlet decomposition.

Our results are organized as follows. 
 
In Section \ref{sec:prel} we fix some notations and give some stochastic calculus preliminaries. We recall the definition of a quasi-strong and quasi-strict solution and we state
a chain rule for $C^{0,1}$-quasi-strong solutions of PDEs, i.e.
Theorem \ref{representation}, which was essentially Corollary 4.1 in \cite{CR1}.
In the whole Section \ref{Chapter3} we switch to the zero-sum stochastic game theory setting. In Section \ref{Section2} we state the main hypotheses for the state equation \eqref{state1} and the integrand of the payoff \eqref{J0} to be well-defined.
Section \ref{S32} proves the so called {\it fundamental lemma} (see Lemma \ref{FundamLemma1}) that is a
consequence of the chain rule Theorem \ref{representation}. It is used in
Section \ref{S33}, which is the core of the paper and where it is proved a verification theorem for a zero-sum stochastic differential game. The same theorem shows that the game has a value.
Section  \ref{S5}  is devoted to the stochastic control theory improving some of the results in \cite{rg1}.

\section{Preliminaries and Stochastic calculus}

\label{sec:prel}

First, we recall some basic definitions, we fix some notations,
in particular the notions of quasi-strong solution
of a parabolic PDE 
and we recall the It\^o type chain rule 
that plays a key role in
the proof of Theorem
\ref{Verification3}. 


\subsection{Preliminaries}

 \label{SPrelim}

In this section, $ 0 \leq t < T <\infty$ will be fixed. The definition and conventions of this
section will be in force for the whole paper.


If $E$ is a (finite-dimensional) linear space,
 $C^{0} (E) $ will denote the space of all continuous functions $f: E \rightarrow \R$.
For $k$ be a positive integer,
$C^k(\R^d)$ denoted the space of the functions such that all the derivatives up to order $k$ exist and are continuous.
Let $I$ be a compact real interval.   $C^{1,2} ( I \times \R^d)$ (respectively $C^{0,1} ( I \times \R^d)$), is the space of continuous functions
$f\ : \ I \times \R^d \ra \R $,  
 $ \ \ \ (s,x) \mapsto f(s,x)$,
such that  $ \partial_s f, \partial_x f, \partial^2_{xx} f  $ (respectively $ \partial_x f $)
are well-defined  and  continuous.
In general $\R^d$-elements will be considered as column vector,
with the exception of $\partial_x f$ which will be by default
a row vector.

We introduce now a useful space of functions.
\begin{dfn}\label{C02ac}
  $C^{0,2}_{ac}(I \times \R^d) $ will be  the linear space of continuous
  functions $f: I \times \R^d \ra \R  $ such that
  the following holds.
\begin{enumerate}
 \item $f(s,\cdot) \in C^2 (\R^d)$ for   all $s \in I$.
 \item For any $x \in \R^d $ the function $s\mapsto f(s,x) $ is absolutely continuous and $x \mapsto \partial_s f(s,x)$ is continuous for almost all $s \in I$.
\item  For any compact  $K \subset \R^d $,  $\sup_{x\in K} |\partial_s f(\cdot, x) | \in L^1 (I).$
\item
  Let $g$ be any second order space derivative of $f$.
  \begin{enumerate}
\item  $g$ is continuous with respect to the space variable $x$ varying on each compact $K$, uniformly with respect to $s \in I;$
  \item  {\it for every $x \in \R^d$ , $s \mapsto g(s,x)$ is a.e. continuous.}
\end{enumerate}
\end{enumerate}
\end{dfn}

\subsection{Concept of PDE solution}

First of all, we introduce a parabolic partial differential equation that will be used in all the work. 
Until the end of this section we suppose
$\sigma:[0,T] \times \R^m \rightarrow L(\R^m, \R^d) $  to be
a locally bounded Borel function, i.e.
for all $K$ compact, $\sup_{r \in [0,T], x\in K} |\sigma(r,x)| < \infty$.
 We recall the formal definition of the linear parabolic operator $\shl$ in \eqref{shl0}.
In this section we will fix $t \in [0,T[$ and Borel functions
$g: \R^d \rightarrow \R$
and $h :[t,T] \times \R^d \ra \R $ such that
\begin{equation} \label{hCond}
  \int_t^T \sup_{x \in K} |h(s,x)| ds < \infty,
\end{equation}
  for every compact $K \subset \R^d$.
We will   consider the inhomogeneous backward parabolic problem
\begin{equation} \label{parabolic}
 \left\{
\begin{array}{l l}
\shl u(s,x)= h(s,x), & s\in [t,T], \ x\in \R^d,\\
u(T,x)= g(x), & x\in \R^d.
\end{array}\right. 
\end{equation}
The  definition below generalizes the notion of strict solution, that
appears in \cite{rg1}, Definition 4.1.
For $s \in [t,T],$  we set
\begin{equation} \label{eq:As}
\sha_s f(x) \doteqdot  \frac{1}{2} Tr [\sigma^\top(s,x) \partial_{xx}f(x) \sigma(s,x)].
\end{equation}
\begin{dfn}\label{strict}
 We say that
$u : [t,T] \times \R^d \ra \R$, $ u \in C^{0,2}_{ac}([t,T]\times \R^d )$ is a \textit{quasi-strict solution} to the backward Cauchy problem \eqref{parabolic} if 
\begin{equation} \label{Estrict}
  u(s,x)= g(x) -  \int_s^T h(r,x) dr + \int_s^T (\sha_r u)(r,x) dr, \forall s \in [t,T],  x \in \R^d.
\end{equation}
 
\end{dfn}
\begin{rem}  \label{Rstrict}
  In this case, for every $x \in \R^d$,
 \begin{equation}\label{Estrict1}
  \partial_s u(s,x) =  h(s,x) - (\sha_s u)(s,x) \qquad a.e.,
  \end{equation}
  where $\partial_s u$  stands for the distributional derivative of $u$.
  \end{rem}

The notion of quasi-strict solution allows to consider
(somehow classical) solutions of \eqref{parabolic}
even though $h, \sigma$ are not continuous in time.

The definition below is a relaxation of the notion
of strong solution defined for instance in \cite{rg2}, Definition 4.2.,
which is based on approximation of strict (classical) solutions.


\begin{dfn}\label{strongNU}
  $u \in C^0 ([t,T]\times \R^d)$ is a \textit{quasi-strong solution}
(with  \textit{approximating sequence} $(u_n)$)
to the backward Cauchy problem \eqref{parabolic} if there exists
a sequence  $(u_n) \in C^{0,2}_{ac}([t,T] \times \R^d)$ and two sequences
$(g_n): \R^d \rightarrow \R$,
$(h_n):[t,T] \times \R^d \rightarrow \R$
such that 
$ \int_t^T \sup_{x \in K} |h_n(s,x)| ds < \infty,  $ for every compact $K \subset \R^d$ 
realizing the following.
\begin{enumerate}
\item $\forall n \in \N$, $u_n$ is a quasi-strict solution of the problem \\
\begin{equation}  
\left\{
\begin{array}{l l}
\shl u_n(s,x)= h_n(s,x) &s \in [t,T], \ x\in \R^d,  \\
u_n(T,x)= g_n(x) & x\in \R^d. 
\end{array}\right. 
\end{equation}

\item For each compact $K \subset \R^d $ 
 \begin{equation}
   \left\{
 \begin{array}{l l}
  \sup_{(s,x) \in [t,T] \times K } | u_n - u |(s,x)  \ra 0,             \\ 
\sup_{x \in K} |h_n -h|(\cdot,x) \ra 0  \qquad \text{in } L^1([t,T]). 
\end{array}\right. 
\end{equation}
\end{enumerate}
\end{dfn}

\begin{dfn} \label{Non-deg}
We say that  $\sigma$ is {\bf non-degenerate}
  if  there is a constant $c > 0$ such that for
  all $(s,x) \in [0,T] \times \R^d$ and $\xi \in \R^d$ we have
\begin{equation} \label{not-deg}
  \ \xi^\top \sigma(s,x)^\top \sigma(s,x) \xi \ge c \vert \xi\vert^2.
\end{equation}
$\sigma$ will be said {\bf degenerate} if it is not non-degenerated.
\end{dfn}

\begin{rem} \label{MildStrong}
  The notion of quasi-strong solution is natural and it can be
  related to the notion of mild solution, 
  see Section 3.5
  of \cite{CR1}.
\end{rem}

\subsection{The It\^o chain rule formula}

In the sequel $(\Omega, \shf, (\shf_s)_{s\geq t}, \P)$ will be a given stochastic basis satisfying the usual conditions. $(W_s)_{s\geq t} $ will denote a classical
$(\shf_s)_{s\geq t} $-Brownian motion with values in $\R^d$. 
A sequence of processes $(X^n_s)$ will be said to converge u.c.p.
if the convergence holds in probability uniformly on compact intervals.

The following result is a slight straightforward generalization of Corollary 4.1
in \cite{CR1},
 that we formulate here only when $ b = 0$.
When $u$
is a strong solution,
it
follows from Corollary 4.6 of \cite{rg2}.
We drive the attention that in the present case
the coefficients $\sigma$ is not continuous,
in particular with respect to time, and we will focus
on the case when $u$ is a quasi-strong solution.

Here, we also assume $u \in C^{0,1}([t,T[\times \R^d) \cap C^{0}([t,T] \times \R^d)$, that is,
$u(T,\cdot)$ may not be differentiable in the space variable.
This allows lower regularity in the terminal condition of the
PDE \eqref{parabolic}.

\begin{thm}\label{representation}
Let $\sigma : [0,T]\times \R^d \rightarrow L(\R^m , \R^d) $ be a Borel locally bounded function
  and $F_0: \Omega \times [0,T]  \rightarrow \R^d$ be an a.s. locally bounded
  progressively measurable field. 
  Let $(S_s)_{s\in [t,T]}$ be a It\^o process such that 
$$
dS_s = F(s) ds + \sigma(s,S_s)dW_s
$$
and
  $$ F(\omega,s) = \sigma \sigma^\top(s,S_s) F_0(\omega,s).$$
Let
 $h:[t,T] \times \R^d \rightarrow \R$ and $g: \R^d \rightarrow \R$
as in the lines before \eqref{hCond} and above.
Let  $u  \in C^{0,1}([t,T[\times \R^d) \cap C^{0}([t,T] \times \R^d)$ be a quasi-strong solution of the
Cauchy problem \eqref{parabolic}
  fulfilling
\begin{equation}\label{L21} 
\int_t^T \vert \partial_x u(s,S_s) \vert^2 ds  < \infty  \ {\textit a.s.}
\end{equation}
Moreover we suppose the validity of one of the following items.
\begin{enumerate} 
\item The approximating sequence $(u_n)$ of Definition \ref{strongNU}
  fulfills
  \begin{align}\label{16}
 \lim_{n\rightarrow \infty}  \int_t^\cdot  (\partial_x u_n(r,S_r)- 
\partial_x u(r,S_r))  F(r) dr = 0 \ \ \text{u.c.p.}
\end{align}
\item
   The Novikov condition  
\begin{equation} \label{Novikov}
  \E\left ( \exp\left(\frac{1}{2} \int_t^T
 \vert \sigma^\top(r,S_r) F_0(r)\vert^2 dr\right)\right) < \infty,
  \end{equation}
  holds.
\end{enumerate}
Then, for $s\in [t,T],$
\begin{equation}\label{215}
  u(s,S_s)= u(t,S_t)  +  \int_t^s  \partial_x u (r,S_r)\sigma (r,S_r) dW_r +
  \shb^S(u)_s,
\end{equation}
where,
\begin{equation}\label{216}
  \shb^S(u)_s   = \int_t^s  h(r,S_r)dr + \int_t^s  \partial_x u(r,S_r)  F(r)  dr.
  \end{equation}
\end{thm}
\begin{rem}
  \begin{enumerate}
    \item
Item 2. of Corollary 4.1 
in \cite{CR1} was formulated under the assumption that $\sigma \sigma^\top$ is invertible.
The condition \eqref{Novikov} was expressed saying that,
setting $ \sigma^{-1}:= \sigma^\top (\sigma \sigma^\top)^{-1},$
$ \sigma^{-1} F$ verifies the Novikov condition.
Clearly, whenever $(\omega,s) \mapsto \sigma^{-1}(s,S_s) F(\omega,s)  $ \\
is bounded, then \eqref{Novikov} is fulfilled.
The present formulation is a generalization and the adaptation
of the proof is straightforward.
\item
  If $\lim_{n\rightarrow \infty} \partial_ xu_ n = \partial_x u $   in $C^0 ([t,T]\times \R^d)$, then assumption \eqref{16} is trivially verified.
  \end{enumerate}
\end{rem}

\section{Application to game theory}\label{Chapter3}

In this section we prove the verification theorem. It states a sufficient condition for a pair of feedback controls $(z_1,z_2)$, for a given $t\in [0,T[$, to constitute a {\it Nash equilibrium}, i.e. a saddle point (see Definition \ref{Saddle}) 
in a zero-sum stochastic differential game, as formulated in Theorem \ref{Verification3}, item 3.

\subsection{Basic setting}\label{Section2}

We give first precise assumptions on the coefficients.

\begin{hypothesis}\label{3.1}
The functions $f, \sigma$, introduced in \eqref{coef_space}, fulfill the following.

\begin{enumerate}
\item $f, \sigma$ are  Borel functions and $\sigma$ is Lipschitz.
   \item There is a constant  $K > 0$ such that $\forall x, x_1, x_2 \in \R^d, t \in [0,T]$, $\forall (u_1,u_2) \in U_1 \times U_2$,
we have 
$$\vert f(t,x,u_1,u_2) \vert + \Vert \sigma(t,x) \Vert  \leq K (1+|x|).$$
    
\end{enumerate}      
\end{hypothesis}
Let us consider $(t,x) \in [0,T] \times \R^d$,
and for a moment $z_1 \in \shz_{1}(t)$ and $z_2 \in \shz_{2}(t).$
Suppose the existence of a unique strong solution (up to indistinguishability) to the state equation \eqref{state1}
In that case, the aforementioned solution
will be denoted by
\begin{equation} \label{ystsol}
  y(s;t,x,z_1, z_2)   \ \text{or} \  y(s), \ s \in [t,T],
  x \in \R^d, z_1 \in \shz_1(t), z_2 \in \shz_2(t).
  \end{equation}
  To establish the  proof of our verification theorem, we also need
  the solution of   \eqref{state1} to have polynomial growth,
i.e. to verify the  Property below.
\begin{property}\label{moments3}
    For $\forall p \ge 1 $,
  there is a constant $N = N(p) > 0$ such that
\begin{equation} \label{eq:moments}
 \E \sup_{t\leq s \leq T}|y(s;t,x,z_1, z_2)|^p  \leq Ne^{NT} (1+ |x|^p).
\end{equation}
\end{property}
The proposition below
is a direct consequence of Theorem 4.6 in \cite{KR}.  
\begin{prop} \label{P44} 
  Let $t \in [0,T[$ and consider
  $ z_1 \in \shz_{1}(t)$ and $ z_2 \in \shz_{2}(t)$.
  Suppose the validity of Hypothesis \ref{3.1}
  and the existence of a  solution $y$ to \eqref{state1}.
Then $y$ fulfills Property \ref{moments3}.

\end{prop}
 
\begin{prop}\label{baseHypFeed}
Assume Hypothesis \ref{3.1}.
     Let $ z_1:[0,T]\times \R^d \mapsto U_1$ and
  $ z_2:[0,T]\times \R^d \mapsto U_2$ be two Borel functions.

  Then a unique strong solution to \eqref{state1}
  exists, which moreover fulfills Property \ref{moments3}, in the two following cases.
  \begin{enumerate}
  \item The non-degeneracy condition \eqref{not-deg} holds.
  \item  $x \mapsto    f(s,x, z_1(s,x),  z_2(s,x))$  is uniformly Lipschitz.
  \end{enumerate}
\end{prop}

\begin{proof}[Proof of Proposition \ref{baseHypFeed}]

  Concerning 1., the SDE \eqref{state1}, whose unknown is $ y$, can be considered as an SDE with Lipschitz non-degenerate diffusion
  coefficient $\sigma$ and linear growth measurable drift
\begin{equation*}
(s,x)  \mapsto f\left(s,x, z_1(s, x), z_2(s, x)\right).
\end{equation*} 
By \cite{veretennikov1982}, Theorem 6., there exists a unique strong solution 
to \eqref{state1}.

Case 2. follows immediately by the standard theorem of existence
and uniqueness for SDEs with Lipschitz coefficients.

 Property \ref{moments3} is a consequence of Proposition \ref{P44}.
\end{proof}







For the statement of the main theorems of the paper
we will need to suppose the validity of the following.

\begin{hypothesis} \label{hyp:ex-uniq}
There exists a unique strong solution (up to indistinguishability) to the state equation \eqref{state1}
for every $(t,x) \in [0,T] \times \R^d$,
for any controls $z_1 \in \shz_{1}(t)$ and $z_2 \in \shz_{2}(t).$
\end{hypothesis}

\begin{rem} \label{rmk:Veret}
  When $\sigma$ is non-degenerate, under Hypothesis
  \ref{3.1}, Proposition \ref{baseHypFeed}
  will state that Hypothesis \ref{hyp:ex-uniq}
  is always fulfilled.

\end{rem}

For the expected payoff \eqref{J0} to be well-defined, we need the validity of Hypothesis \ref{Path0},
 related to the current and final costs $l$ and $g$,
which will presuppose the validity of Hypothesis \ref{hyp:ex-uniq}.

\begin{hypothesis}\label{Path0}
  Let $l: [0,T]\times \R^d \times U_1 \times U_2 \rightarrow \R $ be
  and $g : \R^d \rightarrow \R $ Borel functions.
  Moreover, for any
$t \in [0,T]$ we assume
  that, for all $z_1 \in \shz_{1}(t)$, $z_2 \in \shz_2(t)$, the function $s \mapsto l(s, y(s), z_1(s,y(s)), z_2(s,y(s)))$ is integrable in $[t,T]$, $\omega$ a.s. In particular $$ -\infty  < \int_t^T l(s, y(s), z_1(s,y(s)), z_2(s,y(s))) ds  < \infty \qquad a.s.$$ 
\end{hypothesis}
Previous Hypothesis is indeed quite weak: 
for instance,  if $l$ is locally bounded it is trivially verified.

\begin{dfn} \label{D14} 
  Let us suppose
  Hypotheses
  \ref{hyp:ex-uniq} and \ref{Path0}
  and let $t \in [0,T[$. For a $z_1 \in \shz_{1}(t)$ and a $z_2 \in \shz_2(t)$, we define
  $\tilde{J} : \R^d  \times \shz_1(t) \times \shz_2(t) \times \Omega \ra \R$ by
\begin{equation}\label{TJ0}
\tilde{J}(t,x;z_1,z_2) \doteqdot \int_t^T l(s,y(s), z_1(s,y(s)), z_2(s,y(s))) ds + g (y(T)). 
\end{equation}
\end{dfn}
At this point, the functional $\tilde{J}$ is well-defined for any $(t,x) \in [0,T] \times \R^d$ and $(z_1, z_2) \in \shz_1(t) \times \shz_2(t)$, but not necessarily its expectation $J$.

\subsection{The fundamental lemma}\label{S32}

We now define the notion of quasi-strong and quasi-strict solution of equations \eqref{HJB1} and \eqref{HJB2}.
This concept of solution is necessary to prove the so called fundamental Lemma \ref{FundamLemma1} below. We first introduce an elementary hypothesis and notation, which will be in force
for the rest of Section \ref{Chapter3}.

\begin{hypothesis} \label{hyp:BorelH}
The function $H = H^{-}$ (resp. $H^{+}$),  defined in \eqref{H-} (resp. \eqref{H+}), is
supposed to be Borel.
\end{hypothesis}

\begin{rem} \label{rmk:HamBorel}
  The assumption of Borel measurability of $H^{-}$ and $H^{+}$,
  stated above
  is not necessary if $H_{CV}(s,x,p,u_1,u_2)$ is continuous with
  respect to $(u_1,u_2)$, since it is automatically fulfilled.
  \end{rem}

  \begin{dfn}\label{strict1}
    Let $H = H^{-}$ (resp. $H^{+}$) be the function defined in \eqref{H-} (resp. \eqref{H+}) such that Hypothesis \ref{hyp:BorelH}.
    Let $v \in C^{0,2}_{ac}([0,T], \R^d)$. We set $h(r,x) =  - H(r,x,\partial_x v(r, x)) $, $r \in [0,T]$, $x \in \R^d$. 
We say that $v$ is a \textit{quasi-strict solution} of \eqref{HJB1} (resp. \eqref{HJB2}) if the following holds.
\begin{enumerate}
\item $\int_0^T \sup_{x \in K} |h(r,x)| dr  < \infty$ for every compact $K \in \R^d$,
\item $u = v$ is a quasi-strict solution of \eqref{parabolic}  with $t= 0$. 
\end{enumerate}
\end{dfn}
\begin{dfn}\label{strong1}
  Let $v \in  C^{0,1}([0,T[ \times \R^d) \cap C^{0}([0,T] \times \R^d)$. We set again \\
  $h(r,x) = - H(r,x,\partial_x v(r, x))  $,  $r \in [0,T]$, $x \in \R^d$. 
 We say that $v$ is a \textit{quasi-strong solution} of \eqref{HJB1} (resp. \eqref{HJB2}) if
 $u:= v$ is a quasi-strong solution of \eqref{parabolic} with $t= 0$. 
\end{dfn}

Given a function $v:[0,T] \times \R^d \rightarrow \R^d$,
we will consider the following assumption,
related to
the PDE \eqref{HJB1} (resp. \eqref{HJB2}).

\begin{hypothesis}\label{3.7}
  $v$
  belongs to $ C^{0,1}([0,T[ \times \R^d) \cap C^{0}([0,T] \times \R^d)$ and is a quasi-strong solution  of the Bellman-Isaacs equation \eqref{HJB1} (resp. \eqref{HJB2}) with approximating sequences $(v_n)$.

Moreover, $\partial_x v$ has polynomial growth and one of the two following conditions holds true.
\begin{enumerate}
\item 
The sequence of $(\partial_x v_n)$ converge uniformly on compact sets to $\partial_x v$.
\item
  There is a Borel function $f_0$ such that
$f(r,x,u_1,u_2) = \sigma \sigma^\top f_0(r,x,u_1,u_2)$ and
 \begin{equation}
(r,x,u_1, u_2) \mapsto   \sigma^\top f_0(r,x,u_1, u_2) 
\end{equation}
is  bounded on $[0,T]\times \R^d \times U_1 \times U_2$.
\end{enumerate}
\end{hypothesis}
Making use of convention \eqref{ystsol},
we recall that for given admissible controls $z_1 \in \shz_{1}(t)$, $z_2 \in \shz_{2}(t)$,
 $y(s) = y(s;t,x,z_1, z_2), s \in [t,T],$
will denote the unique solution of \eqref{state1}, whenever it exists.

\begin{rem} \label{PPolgrowth}
  \begin{enumerate}
    \item By definition of quasi-strong solution of \eqref{HJB1} (resp. \eqref{HJB2})
$$h(s,y(s)) = - H(s, y(s), \partial_x v(s, y(s))),$$ 
fulfills \eqref{hCond}. 
 \item $\partial_x v $ with polynomial growth implies that $v$ has polynomial growth. This in turn implies that $g$ has polynomial growth.
  \end{enumerate}
\end{rem}

Similarly to Lemma 4.10 of \cite{rg2} we state the fundamental lemma,
which is based on Theorem \ref{representation}.
The Hamiltonian $H_{CV}$ was introduced in
\eqref{Hext}.

\begin{lem}\label{FundamLemma1}
  We suppose Hypotheses \ref{3.1},
  \ref{hyp:ex-uniq} and \ref{Path0}.
  We set $H = H^{+}$ (resp. $H = H^{-}$)
  fulfilling Hypothesis \ref{hyp:BorelH}.
   Suppose the existence of 
  functions  $v$ satisfying Hypothesis \ref{3.7}
with respect to PDE \eqref{HJB1} (resp. \eqref{HJB2}).

Then $\forall (t,x) \in [0,T] \times \R^d$ and $\forall z_1 \in \shz_{1}(t)$, $\forall z_2 \in \shz_{2}(t)$, we have
  \begin{align*}
\tilde{J}(t,x; z_1, z_2) = v(t,x) + \int_t^T  \left(H_{CV}(r,y(r), \partial_x v(r,y(r)), z_1(r,y(r)), z_2(r,y(r)))  \right. & \\
   \left. - H(r,y(r),\partial_x v(r,y(r))) \right) dr + M_T, &
\end{align*}
where $M$ is a (square integrable) mean-zero r.v.
and $\tilde{J}$ is defined in \eqref{TJ0}.
\end{lem}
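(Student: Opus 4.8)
The plan is to apply the It\^o chain rule Theorem \ref{representation} to the quasi-strong solution $v$ along the state process $y = y(\cdot\,;t,x,z_1,z_2)$, and then to rearrange the resulting Fukushima--Dirichlet decomposition into the asserted form. The key observation is that, for fixed feedback controls $(z_1,z_2)$, equation \eqref{state1} is an SDE of the type treated in Theorem \ref{representation}, with diffusion $\sigma$ and progressively measurable (indeed deterministic feedback, hence $\omega$-independent) drift field
\[
F(s,y) := f(s,y,z_1(s,y),z_2(s,y)) = b(s,y) + f_1(s,y,z_1(s,y),z_2(s,y)).
\]
Under Hypothesis \ref{3.1}, $F$ is Borel of linear growth, so $y$ plays the role of $S$ and $F$ that of $f$ in the theorem.

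First I would verify the assumptions of Theorem \ref{representation}. Condition \eqref{L21}, namely $\int_t^T |\partial_x v(r,y(r))|^2\,dr < \infty$ a.s., follows from the polynomial growth of $\partial_x v$ assumed in Hypothesis \ref{3.7} together with the moment bound \eqref{eq:moments} of Property \ref{moments3} (via Proposition \ref{P44}). For the dichotomy, note that $F - b = f_1(\cdot,z_1(\cdot),z_2(\cdot))$. If item 1 of Hypothesis \ref{3.7} holds, the uniform convergence $\partial_x v_n \to \partial_x v$ on compacts yields \eqref{16} through Remark \ref{RNovikov}(1); if item 2 holds, boundedness of $\sigma^{-1}f_1$ gives boundedness of $\sigma^{-1}(F-b)$ along $y$, hence the Novikov condition \eqref{Novikov} by Remark \ref{RNovikov}(2). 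In either case one of the two items of Theorem \ref{representation} is met.

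Applying the theorem with $u = v$ and $h(r,x) = -H^0(r,x,\partial_x v(r,x))$, and evaluating \eqref{215}--\eqref{216} at $s = T$, I would use the terminal condition $v(T,y(T)) = g(y(T))$ and $y(t) = x$. Since $F(r,y(r)) - b(r,y(r)) = f_1(r,y(r),z_1(r,y(r)),z_2(r,y(r)))$, the two drift terms of \eqref{216} collapse to $\int_t^T \langle f_1,\partial_x v\rangle\,dr$. Adding $\int_t^T l\,dr$ to both sides and recalling the definition \eqref{Hext} of $H^0_{CV}$, that is $\langle f_1,\partial_x v\rangle + l = H^0_{CV}(\cdot,\partial_x v,z_1,z_2)$, reproduces exactly the claimed identity, with
\[
M_s := \int_t^s \partial_x v(r,y(r))\,\sigma(r,y(r))\,dW_r.
\]

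The last and main point is to show that $M$ is a genuine (not merely local) square-integrable martingale with $\E[M_T] = 0$. For this I would estimate $\E\int_t^T |\partial_x v(r,y(r))\,\sigma(r,y(r))|^2\,dr$, bounding the integrand by the polynomial growth of $\partial_x v$ (Hypothesis \ref{3.7}) and the linear growth of $\sigma$ (Hypothesis \ref{3.1}), and then invoking the moment estimate \eqref{eq:moments} for $\sup_{t\le s\le T}|y(s)|^p$. Finiteness of this expectation promotes the stochastic integral to a true martingale, yielding $\E[M_T]=0$ and square-integrability. I expect this integrability bookkeeping, together with the preliminary task of correctly matching the two regimes of Hypothesis \ref{3.7} to the two items of Theorem \ref{representation}, to be the only delicate part; the algebraic rearrangement leading to the statement is otherwise immediate.
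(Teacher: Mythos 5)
Your proposal is correct and follows essentially the same route as the paper's own proof: both apply Theorem \ref{representation} with $u=v$ along the controlled state process, check \eqref{L21} via the polynomial growth of $\partial_x v$ and the moment bound of Property \ref{moments3}, match the two items of Hypothesis \ref{3.7} to the two alternative conditions of the chain rule (convergence of $\partial_x v_n$ giving \eqref{16}, boundedness of $\sigma^{-1}f_1$ giving Novikov), add the a.s.\ finite running-cost integral to both sides, and upgrade the stochastic integral to a square-integrable mean-zero martingale by the same growth-plus-moments estimate. If anything, your explicit treatment of both regimes of Hypothesis \ref{3.7} is slightly more detailed than the paper's, which only records the convergence case in words.
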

\begin{rem} \label{RTildeJ}
  We recall that, for a generic couple  $(z_1, z_2)$,
  $\tilde{J}(t,x;z_1,z_2)$ is a.s. finite though it could not be integrable.
  \end{rem}

\begin{proof} [Proof of Lemma \ref{FundamLemma1}]

  It is not possible to use It\^{o}'s formula because $\partial_s v, \partial_{xx}v$ do not necessarily exist.
  To overcome this difficulty we use the representation Theorem
  \ref{representation} for $u = v$. By Hypothesis \ref{3.7} we know that
  $v$ is a quasi-strong solution of
\begin{align*}
\shl v(t,x)&= -H(t,x,\partial_x v(t,x)), \\
 v(T,x)&= g(x).
\end{align*}
 By Proposition \ref{P44}, 
 the process $y$ in the statement fulfills the moments inequality \eqref{moments3} and $\partial_x v$ has polynomial growth, so that  \eqref{L21} is verified for $u=v$.

We wish to apply Theorem \ref{representation}.
We set $S_r = y(r)$ and
$F_0(\omega,r)  = f_0(r,y(r),z_1(r,y(r)),z_2(r,y(r))$.
We remark that if item 1. 
(resp. item 2.) of
Hypothesis \ref{3.7} is verified then
item 1. (resp. item 2.),
of Theorem \ref{representation}
is fulfilled.
This implies
\begin{equation}\label{Eq1}
g(y(T))= v(t,x)  +  \int_t^T \partial_x v(r,y(r)) \sigma(r,y(r)) dW_r + \shb^S(v)_T,
\end{equation}
where 
\begin{eqnarray}
  \shb^S(v)_s &=& \int_t^s  -H(r,y(r),\partial_x v(r,y(r)))dr \\ &+&
  \int_t^s  \partial_x v(r,y(r))  f(r,y(r),z_1(r,y(r)),z_2(r,y(r)))  dr, 
            \ s \in [t,T]. \nonumber
\end{eqnarray}
Now, $\int_t^T l(r,y(r;t,x,z_1,z_2), z_1(r,y(r)), z_2(r,y(r))) dr$ is a.s. finite by Hypothesis \ref{Path0} so we can add it to both sides of the equality \eqref{Eq1}
\begin{align*}\label{eqq2}
& \int_t^T l(r,y(r), z_1(r,y(r)), z_2(r,y(r))) dr + g (y(T))  =  v(t,x) \\
 &\qquad   + \int_t^T  \left[ -H(r,y(r),\partial_x v(r,y(r))) \right. \\
&\qquad \qquad \left. + H_{CV}(r,y(r),\partial_x v (r,y(r)), z_1(r,y(r)), z_2(r,y(r))) \right]   dr\\
&\qquad  + \int_t^T \partial_x v(r,y(r)) \, \sigma(r,y(r)) dW_r.
\end{align*}
Cauchy-Schwarz inequality together with the moments Property \ref{moments3}  imply that
\begin{align*}
\E \int_t^T |(\partial_x v(r,y(r))\, \sigma(r,y(r))|^2 dr < \infty, 
\end{align*}
so the stochastic integral process
$$ M_s:=  \int_t^s \partial_x v(r,y(r)) \, \sigma(r,y(r)) dW_r, \ s \ge t,
$$
is a square integrable martingale.
By the definition of $\tilde{J}$ in $\eqref{TJ0}$, the conclusion follows.
\end{proof}

 \subsection{Verification theorem and value of the game}

\label{S33}
 
The main result of this section is Theorem \ref{Verification3}.
We introduce below one hypothesis which can be verified
by a couple of functions
 $ z^\star_i : [0,T]  \times \R^d \times \R^d \ra U_i, $ for $i\in \{1,2 \}$.
 This notation constitutes a small (practical) abuse of notation, since
 those letters were indicating feedback controls.

\begin{hypothesis}\label{hyp_select}
  For all $(s,x,p) \in [0,T] \times \R^d \times \R^d$ we have
\begin{equation}
\left\{
\begin{array}{l l}\label{if_2}
&  \sup_{u_1\in U_1}   H_{CV} (s,x,p, u_1 , z^\star_2(s,x,p)) \\
&\qquad \qquad =\inf_{u_2\in U_2} \sup_{u_1\in U_1} H_{CV} (s,x,p, u_1, u_2),\\
&\inf_{u_2\in U_2} H_{CV} (s,x,p,z^\star_1(s,x,p), u_2)\\
&\qquad \qquad  = \sup_{u_1\in U_1}  \inf_{u_2\in U_2}   H_{CV} (s,x,p, u_1, u_2).
\end{array}
\right.
\end{equation}
\end{hypothesis}
\begin{rem} \label{rmk:minmax}
  \begin{enumerate}
  \item For a couple $(z^\star_1, z^\star_2)$,
    Hypothesis \ref{hyp_select} is equal to
    (2.26) and (2.27) in  \cite{FriedGames}.
    \item
  Alternative formulations to Hypothesis \ref{hyp_select}
  in the case of non-zero sum games
  are for example (2.10) in \cite{FriedGames} 
 and Assumption (A3) in \cite{Hamadene3}.
\end{enumerate}
\end{rem}

Proposition \ref{equiv}
 below  states the existence of such a couple
$(z^\star_1,z^\star_2)$ fulfilling Hypothesis \ref{hyp_select} if
 the Hamiltonian is continuous with respect to the control variables.
That proposition also states
an important saddle point property of the Hamiltonian.

\begin{prop}\label{equiv}
  \begin{enumerate}
    \item
  Assume $H_{CV} (s,x,p, u_1, u_2)$ to be  continuous in $(u_1,u_2)$.
     Then, there exists a couple $( z^\star_1, z^\star_2)$
such that
  Hypothesis \ref{hyp_select} is fulfilled.
\item Let be $  z^\star_i : [0,T]  \times \R^d \times \R^d \ra U_i $
    for $i\in \{1,2 \}$.
   The following are equivalent.
\begin{enumerate}
\item  The Isaacs' condition (see Definition \ref{Isaacs0}) holds 
and $( z^\star_1, z^\star_2)$
fulfills
  Hypothesis \ref{hyp_select}.
  \item
    For all $(s,x,p) \in [0,T] \times \R^d \times \R^d$ we have, for any $u_1 \in U_1$ and $u_2 \in U_2$,
\begin{equation}
\left\{
\begin{array}{l l}\label{if_3}
&H_{CV} (s,x,p, u_1 ,  z^\star_2(s,x,p)) \leq H_{CV} (s,x,p,  z^\star_1(s,x,p) ,  z^\star_2(s,x,p)) \\
& H_{CV} (s,x,p,  z^\star_1(s,x,p), u_2) \geq H_{CV} (s,x,p,  z^\star_1(s,x,p) ,  z^\star_2(s,x,p)).
\end{array}
\right.
\end{equation}
\end{enumerate}
\end{enumerate}
\end{prop}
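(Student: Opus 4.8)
The plan is to treat the two items separately: item~1 is an existence-of-measurable-selectors statement, whereas item~2 is a purely pointwise (in $(s,x,p)$) minimax equivalence.

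For item~1 I would first fix $(s,x,p)$ and exploit compactness of $U_1,U_2$ together with continuity of $H^0_{CV}$ in $(u_1,u_2)$. Since $H^0_{CV}(s,x,p,\cdot,\cdot)$ is continuous on the compact product $U_1\times U_2$, the partial maximum $\phi(s,x,p,u_2):=\sup_{u_1\in U_1}H^0_{CV}(s,x,p,u_1,u_2)$ is attained and is continuous in $u_2$ (the maximum of a jointly continuous function over a compact set depends continuously on the remaining parameter, by uniform continuity). Hence the outer infimum $H^{0,+}(s,x,p)=\inf_{u_2\in U_2}\phi(s,x,p,u_2)$ is attained at some $u_2\in U_2$, and symmetrically $H^{0,-}(s,x,p)=\sup_{u_1}\inf_{u_2}H^0_{CV}$ is attained at some $u_1\in U_1$; so for each $(s,x,p)$ the sets of optimizers realizing Hypothesis~\ref{hyp_select} are nonempty.

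To upgrade this pointwise existence to genuine maps $z^\star_1,z^\star_2$ I would invoke the measurable maximum theorem. Since $f_1$ and $l$ are Borel, $H^0_{CV}$ is Borel in $(s,x,p)$ for each $(u_1,u_2)$ and continuous in $(u_1,u_2)$, i.e. it is a Carath\'eodory function of the parameter $(s,x,p,u_2)$ and the variable $u_1$. Applying the theorem to the inner supremum shows that $\phi$ is again Carath\'eodory (Borel in $(s,x,p)$, continuous in $u_2$) and that the value $H^{0,+}$ is Borel with a Borel selector $z^\star_2(s,x,p)$ attaining the infimum; repeating the argument for $\inf_{u_2}H^0_{CV}$ and its supremum over $u_1$ produces $z^\star_1$. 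By construction $\sup_{u_1}H^0_{CV}(s,x,p,u_1,z^\star_2(s,x,p))=\phi(s,x,p,z^\star_2(s,x,p))=H^{0,+}(s,x,p)$ and likewise for $z^\star_1$, which is precisely Hypothesis~\ref{hyp_select}. This same argument also yields the Borel measurability of $H^{0,+}$ and $H^{0,-}$ announced in Remark~\ref{rmk:HamBorel}.

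For item~2 I would work at fixed $(s,x,p)$ and abbreviate $K(u_1,u_2):=H^0_{CV}(s,x,p,u_1,u_2)$, $\bar u_1:=z^\star_1(s,x,p)$, $\bar u_2:=z^\star_2(s,x,p)$. The implication $(b)\Rightarrow(a)$ is the classical fact that a saddle point forces equality of the two iterated extrema: from \eqref{if_3} one gets $\sup_{u_1}K(u_1,\bar u_2)=K(\bar u_1,\bar u_2)=\inf_{u_2}K(\bar u_1,u_2)$, so that the chain $K(\bar u_1,\bar u_2)\le \sup_{u_1}\inf_{u_2}K\le\inf_{u_2}\sup_{u_1}K\le K(\bar u_1,\bar u_2)$ collapses to equalities, giving both the Isaacs identity and Hypothesis~\ref{hyp_select}. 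Conversely, for $(a)\Rightarrow(b)$, Hypothesis~\ref{hyp_select} together with the common Isaacs value $V:=H^{0,+}(s,x,p)=H^{0,-}(s,x,p)$ yields $K(u_1,\bar u_2)\le V$ for all $u_1$ and $K(\bar u_1,u_2)\ge V$ for all $u_2$; evaluating at $u_1=\bar u_1$ and $u_2=\bar u_2$ forces $K(\bar u_1,\bar u_2)=V$, and substituting this back turns the two bounds into exactly the saddle inequalities \eqref{if_3}. The only genuinely delicate point is the measurable selection in item~1: pointwise existence of optimizers is immediate from compactness, but extracting Borel maps $z^\star_1,z^\star_2$ in the joint variable $(s,x,p)$ is where care is needed, and I expect the measurable maximum theorem (or an equivalent Kuratowski--Ryll-Nardzewski-type selection) to be the indispensable tool; item~2 is a routine reorganization of the standard saddle-point identities.
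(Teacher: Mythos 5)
Your proof is correct and follows essentially the same route as the paper: for item 1 the paper likewise reduces to a marginal function $k(s,x,p,u_2)=\sup_{u_1\in U_1}H^0_{CV}(s,x,p,u_1,u_2)$ that is Borel in $(s,x,p)$ and continuous in $u_2$, notes the infimum is attained by compactness, and then applies a measurable selection result (Lemma 1 of Bene\v{s}, playing exactly the role of your measurable maximum / Kuratowski--Ryll-Nardzewski theorem). For item 2 the paper runs the same collapsing chains of saddle-point inequalities (following Lucchetti, Theorem 4.1.1), so your argument matches it step for step.
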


\begin{proof}

  \begin{enumerate}
  \item We first prove the existence of a Borel function $  z^\star_2$
    such that
\begin{equation} \label{eq:z2}
\sup_{u_1\in U_1}   H_{CV} (s,x,p, u_1 ,  z^\star_2(s,x,p))  =\inf_{u_2\in U_2} \sup_{u_1\in U_1} H_{CV} (s,x,p, u_1, u_2).
\end{equation}
In order to do so, we apply Lemma 1 of \cite{benes} as follows.
Let $M= [0,T]\times \R^d\times \R^d$, $U= U_2$ and $A= \R$, and consider the function $k:M\times U\ra A$ defined as  
$$k(s,x,p,u) := \sup_{u_1\in U_1} H_{CV} (s,x,p, u_1, u).$$
Since $H_{CV}$ is Borel and continuous in $u_1$, as we mentioned earlier,
$k$ is a Borel function. On the other hand since
$H_{CV}$ is continuous in $(u_1,u)$ and the fact that $U_1$ is compact
it is easy to show that $k$ is continuous in $u$.
Define the function 
$$a(s,x,p):= \inf_{u_2\in U_2} \sup_{u_1\in U_1} H_{CV} (s,x,p, u_1, u_2))=  \inf_{u_2\in U_2} k(s,x,p,u_2).$$
Then $a(s,x,p)\in k(s,x,p,U)$.
The hypotheses of Lemma 1 in \cite{benes} are satisfied, hence there exists a Borel function $ z_2^\star : M \ra U$ such that
$$a(s,x,p) = k(s,x,p,  z_2^\star(s,x,p))$$ and the claim \eqref{eq:z2} is proved.

The existence of a Borel function $  z^\star_1(s,x,p)$ such that
\begin{equation*}
\inf_{u_2\in U_2} H_{CV} (s,x,p,   z^\star_1(s,x,p), u_2)  = \sup_{u_1\in U_1}  \inf_{u_2\in U_2}   H_{CV} (s,x,p, u_1, u_2)
\end{equation*}
follows using the same argument.

\item 
We apply the same argument of \cite{lucchetti}, Theorem 4.1.1.
We first prove that $(a) \Rightarrow (b)$.
We have
\begin{align}
&\inf_{u_2\in U_2} \sup_{u_1\in U_1} H_{CV} (s,x,p, u_1, u_2)  \nonumber \\
&\qquad = \sup_{u_1\in U_1} H_{CV} (s,x,p, u_1,   z^\star_2(s,x,p)) \label{eq_315}\\
&\qquad \geq H_{CV} (s,x,p,   z^\star_1(s,x,p),  z^\star_2(s,x,p))
\nonumber
  \\
& \qquad \geq  \inf_{u_2\in U_2} H_{CV} (s,x,p,   z^\star_1(s,x,p), u_2)
  \nonumber \\ 
& \qquad  = \sup_{u_1\in U_1}  \inf_{u_2\in U_2}   H_{CV} (s,x,p, u_1, u_2),\label{eq_316}
\end{align}
where
the equalities
\eqref{eq_315} and \eqref{eq_316} are justified by \eqref{if_2}.
Since Isaacs' condition
is in force, the above
inequalities are indeed equalities and therefore
 (b) is proved.

Viceversa, suppose (b) holds. Then 
\begin{align} \label{chainI}
&\inf_{u_2\in U_2} \sup_{u_1\in U_1} H_{CV} (s,x,p, u_1, u_2) \nonumber  \\
&\qquad \leq \sup_{u_1\in U_1} H_{CV} (s,x,p, u_1,   z^\star_2(s,x,p)) \nonumber\\
&\qquad \le H_{CV} (s,x,p,   z^\star_1(s,x,p),  z^\star_2(s,x,p))\\
& \qquad \le  \inf_{u_2\in U_2} H_{CV} (s,x,p,   z^\star_1(s,x,p), u_2) \nonumber\\
& \qquad  \leq \sup_{u_1\in U_1}  \inf_{u_2\in U_2}   H_{CV} (s,x,p, u_1, u_2). \nonumber
\end{align}
Since, trivially,  $$\sup_{u_1\in U_1}  \inf_{u_2\in U_2}   H_{CV} (s,x,p, u_1, u_2) \leq \inf_{u_2\in U_2} \sup_{u_1\in U_1} H_{CV} (s,x,p, u_1, u_2),$$
then Isaacs's condition (see Definition \ref{Isaacs0}) holds.
Moreover the inequalities in \eqref{chainI} become equalities, so
that Hypothesis \ref{hyp_select} is verified for $z^\star_1, z^\star_2$.
Finally, this implies (a).
\end{enumerate}

\end{proof}

\begin{rem} \label{rmk:Hamadene}
  If $H_{CV} $ is continuous in the control variables a direct consequence of
  Proposition \ref{equiv} implies that Isaacs' condition
  is equivalent to the existence of $  z^\star_i, i = 1,2$
  fulfilling \eqref{if_3}.

  This is similar to what is  stated in \cite{Hamadene1}, 
  above Lemma 2.1.
  \end{rem}
 
\begin{rem} \label{R-+}
  If Isaacs' condition
  holds,
  then $H^{-} \equiv H^{+}$ so that \eqref{HJB1} and
  \eqref{HJB2} coincide.
    \end{rem}

 Below, we state the most significant results of the section,
  i.e. the verification Theorem \ref{Verification3} and
  Corollary \ref{cor_ver_3}, which is devoted to the particular case
  where the diffusion is non-degenerate and  $H_{CV}$
  is continuous with respect to the control variables.
Indeed Theorem \ref{Verification3} and Corollary \ref{CVerification3}
link quasi-strong solutions of the BI equations with the
values of the game. Before stating the aforementioned theorem
we formulate a remark.

\medskip

\begin{rem} \label{RPayoff}
\begin{enumerate}
\item  The payoff function $J$,  defined in  \eqref{J0},
  is connected with $\tilde J$, defined in \eqref{TJ0}, by
 \begin{equation*}
  J(t,x; z_1,z_2) = \E(\tilde J(t,x; z_1,z_2)), \ t \in[0,T], x \in \R^d,  z_1 \in \shz_1(t), z_2 \in \shz_2(t),
  \end{equation*}
  provided previous expectation makes sense. We insist on the fact
that we
do not need any integrability assumption for every $z_1 \in \shz_1(t)$ and $z_2 \in \shz_2(t)$.
  \item  The upper and lower value $V^+, V^-$ for the stochastic differential game with payoff $J$ and state equation \eqref{state1} are defined in Definition \ref{Values} of Section \ref{Intro}. 
  is a solution to the SDE \eqref{state1}, see also \eqref{ystsol}.
 \end{enumerate}
\end{rem}

\begin{thm}\label{Verification3}
  Let $t\in [0,T], x \in \R^d$. Assume Isaac's condition (see Definition \ref{Isaacs0}) together with Hypotheses \ref{3.1},
\ref{hyp:ex-uniq},
  \ref{Path0} to
hold.
   We also assume  the existence of a function
   $v$  related to \eqref{HJB1} 
fulfilling  Hypothesis \ref{3.7}.
  
  Let $  z_i^*: [0,T] \times \R^d \times  \R^d \mapsto U_i$, for $i \in  \{ 1,2\}$, satisfying Hypothesis \ref{hyp_select} and set, for $i\in \{1,2 \}$,
 \begin{equation}\label{feed_con_p}
   z_i^{\star}(s, x) = z_i^*(s, x,\partial_x v(s,x)).
 \end{equation}
 Then the following properties hold.
 \begin{enumerate}
\item For any $(z_1,z_2) \in \shz_1(t) \times\shz_2(t)$, we have the following. 
\begin{enumerate}
\item  $ J(t,x;z^{\star}_1,z_2)$  is well-defined and greater than $-\infty$;
\item $ J(t,x;z_1,z^{\star}_2)$ is well-defined and smaller than $+\infty$.
\end{enumerate}
\item
  $J(t,x;z^{\star}_1,z^{\star}_2) =  v(t,x)$.
\item The couple $(z^{\star}_1,z^{\star}_2)$
  is a saddle point for the game, in the sense of
  Definition \ref{Saddle}.
\item The payoff functional evaluated at  $(z^\star_1,z^\star_2)$ is equal to both the upper and
  lower value of the game, i.e. $J(t,x;z^\star_1,z^\star_2) = V^+(t,x) = V^-(t,x)$.
  In particular the game admits a value.
\end{enumerate}
\end{thm}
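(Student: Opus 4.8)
The plan is to feed the special feedback controls into the Fundamental Lemma and exploit the saddle-point property of the Hamiltonian that Proposition \ref{equiv} provides. The key observation is that the whole theorem is a corollary of Lemma \ref{FundamLemma1} once one knows how the integrand behaves under the three relevant control choices. Since Hypothesis \ref{Isaacs0} holds, $H^{0,+} \equiv H^{0,-} =: H^0$ by Remark \ref{R-+}, so the single quasi-strong solution $v$ plays the role of the value simultaneously for the upper and lower problems. Writing $z_i^\star(s,x) = z_i^*(s,x,\partial_x v(s,x))$, I would first record that, because $(z_1^*,z_2^*)$ satisfies Hypothesis \ref{hyp_select} and Isaacs holds, Proposition \ref{equiv}, item 2, yields the pointwise saddle inequalities \eqref{if_3} evaluated along $p = \partial_x v$: for every $u_1\in U_1$, $u_2 \in U_2$,
\begin{align*}
H^0_{CV}(s,x,\partial_x v, u_1, z_2^\star) \le H^0_{CV}(s,x,\partial_x v, z_1^\star, z_2^\star) = H^0(s,x,\partial_x v) \le H^0_{CV}(s,x,\partial_x v, z_1^\star, u_2).
\end{align*}

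\textbf{Items 1 and 2.} I would apply Lemma \ref{FundamLemma1} with the three control pairs $(z_1^\star,z_2^\star)$, $(z_1,z_2^\star)$ and $(z_1^\star,z_2)$ in turn. For $(z_1^\star,z_2^\star)$ the integrand $H^0_{CV}(r,y(r),\partial_x v, z_1^\star, z_2^\star) - H^0(r,y(r),\partial_x v)$ vanishes identically by the middle equality above, so the lemma gives $\tilde J(t,x;z_1^\star,z_2^\star) = v(t,x) + M_T$ with $M$ a mean-zero square-integrable martingale; taking expectations proves item 2. For $(z_1,z_2^\star)$, the first saddle inequality forces the integrand to be nonnegative, so $\tilde J(t,x;z_1,z_2^\star) \ge v(t,x) + M_T$; since the negative part of $\tilde J$ is then dominated by an integrable quantity, the expectation $J(t,x;z_1,z_2^\star)$ is well-defined and bounded below by $v(t,x) > -\infty$, giving item 1(a). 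The symmetric argument with $(z_1^\star,z_2)$ and the second inequality gives item 1(b). Here I must be careful that the lemma's conclusion holds for \emph{any} admissible pair (which it does, under Hypotheses \ref{3.1}, \ref{hyp:BorelH}, \ref{hyp:ex-uniq}, \ref{3.7}, \ref{Path0}), and that measurability of $z_i^\star$ as a feedback control follows from Borel measurability of $z_i^*$ composed with the continuous map $\partial_x v$.

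\textbf{Item 3.} The saddle inequalities \eqref{saddleClass} now follow by taking expectations in the bounds just derived: $J(t,x;z_1,z_2^\star) \ge v(t,x) = J(t,x;z_1^\star,z_2^\star)$ and $J(t,x;z_1^\star,z_2) \le v(t,x) = J(t,x;z_1^\star,z_2^\star)$, which is exactly \eqref{saddleClass}. Condition \eqref{saddleAdd} holds because, for the pair $(z_1^\star,z_2^\star)$, the expectation $J$ genuinely exists (item 2), so $J^+ = J^- = J$ there.

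\textbf{Item 4.} Finally I would chain the saddle inequalities with the definitions of $V^\pm$. From item 1(b), $J^+(t,x;z_1^\star,z_2) \le J(t,x;z_1^\star,z_2)$ with the right-hand side defined, and $\sup_{z_1}$ only increases this, giving $V^+ = \inf_{z_2}\sup_{z_1} J^+ \le \sup_{z_1} J(t,x;z_1,z_2^\star)$; combined with \eqref{saddleClass} this yields $V^+(t,x) \le v(t,x)$. Symmetrically $V^-(t,x) \ge v(t,x)$. Since $V^- \le V^+$ always (Remark \ref{RInfSup}), all quantities coincide with $v(t,x) = J(t,x;z_1^\star,z_2^\star)$, proving the game has a value. \textbf{The main obstacle} I anticipate is the bookkeeping around the possibly non-integrable payoffs: one must argue the one-sided integrability of $\tilde J$ under $(z_1,z_2^\star)$ and $(z_1^\star,z_2)$ purely from the sign of the Hamiltonian gap plus the martingale $M$, and then handle the $J^\pm$ conventions \eqref{Jpm1} consistently so that the $\inf\sup$ and $\sup\inf$ manipulations in item 4 are legitimate even where the raw expectation might fail to exist.
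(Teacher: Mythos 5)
Your plan is the paper's own: feed the three pairs $(z_1^\star,z_2^\star)$, $(z_1^\star,z_2)$, $(z_1,z_2^\star)$ into Lemma \ref{FundamLemma1}, use Hypothesis \ref{hyp_select} (equivalently, via Proposition \ref{equiv}, the pointwise saddle property of $H^0_{CV}$ along $p=\partial_x v$) to fix the sign of the Hamiltonian gap inside the integral, deduce one-sided integrability of $\tilde J$, take expectations, and then sort out the $J^\pm$ conventions for item 4. However, you have reversed every inequality, and as written the key steps are false. Setting $\hat x := y(r)$, $\hat p := \partial_x v(r,\hat x)$ and recalling \eqref{feed_con_p}, the first line of \eqref{if_3} applied with $u_1 = z_1(r,\hat x)$ gives
\[
H^0_{CV}(r,\hat x,\hat p,\,z_1(r,\hat x),\,z_2^{\star}(r,\hat x))\ \le\ H^0_{CV}(r,\hat x,\hat p,\,z_1^{\star}(r,\hat x),\,z_2^{\star}(r,\hat x))\ =\ H^0(r,\hat x,\hat p),
\]
so for the pair $(z_1,z_2^\star)$ the integrand in the Fundamental Lemma is \emph{nonpositive}, not nonnegative: the conclusion is $\tilde J(t,x;z_1,z_2^\star)\le v(t,x)+M_T$, hence $J(t,x;z_1,z_2^\star)$ is well-defined and $\le v(t,x)<+\infty$, which is item 1(b) --- not, as you claim, a lower bound giving item 1(a). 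Symmetrically, the second line of \eqref{if_3} makes the gap \emph{nonnegative} for $(z_1^\star,z_2)$, yielding $\tilde J(t,x;z_1^\star,z_2)\ge v(t,x)+M_T$ and item 1(a). This is exactly the paper's derivation of \eqref{eq_2_3_3a} and \eqref{eq_2_3_5a}, with the roles you assigned interchanged.

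The inversion is not a harmless relabelling, because it falsifies your item 3: you derive $J(t,x;z_1,z_2^\star)\ge J(t,x;z_1^\star,z_2^\star)\ge J(t,x;z_1^\star,z_2)$ and assert this ``is exactly \eqref{saddleClass}'', but \eqref{saddleClass} is the \emph{opposite} chain $J(t,x;z_1,z_2^\star)\le J(t,x;z_1^\star,z_2^\star)\le J(t,x;z_1^\star,z_2)$ (Player 1 maximizes, Player 2 minimizes); your inequalities are false in general and would instead describe a saddle point for the game with the players' roles swapped. The same swap contaminates item 4, where you invoke ``item 1(b)'' for the pair $(z_1^\star,z_2)$, which is the pair governed by item 1(a). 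That said, once all directions are restored --- nonpositive gap for $(z_1,z_2^\star)$, nonnegative gap for $(z_1^\star,z_2)$ --- your argument reproduces the paper's proof of items 1--3, and your scheme for item 4 (namely $V^+\le v(t,x)$ from item 1(b), the convention \eqref{Jpm1} and the saddle inequality; $V^-\ge v(t,x)$ symmetrically; and $V^-\le V^+$ from Remark \ref{RInfSup}) is sound and in fact slightly more streamlined than the paper's chain \eqref{item4_1}--\eqref{Esupinf+}, which establishes the stronger statement that $\sup\inf$ and $\inf\sup$ coincide for each of $J^-$ and $J^+$ separately.
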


    A direct consequence of Theorem \ref{Verification3}
    items 2. and 4. is the following. 
    \begin{cor} \label{CVerification3}
      Under the same assumptions of Theorem \ref{Verification3},
    for every $(t,x)$,
  we have
  $$ v(t,x) = V^-(t,x) = V^+(t,x).$$
  In particular $V := V^- = V^+$ is the unique quasi-strong
  solution of both \eqref{HJB1} and \eqref{HJB2}.
  \end{cor}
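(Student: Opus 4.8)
The plan is to read off the equalities $v = V^- = V^+$ directly from Theorem \ref{Verification3} and then to exploit the fact that the value functions are intrinsic to the game in order to conclude uniqueness. First I would combine items 2 and 4 of Theorem \ref{Verification3}: item 2 gives $J(t,x;z^\star_1,z^\star_2) = v(t,x)$, and item 4 gives $J(t,x;z^\star_1,z^\star_2) = V^+(t,x) = V^-(t,x)$. Chaining these identities yields
\[
v(t,x) = V^-(t,x) = V^+(t,x), \qquad (t,x) \in [0,T]\times\R^d,
\]
which is the first assertion. Since $V^+ = V^-$, the game admits a value and $V := V^\pm = v$.

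For the second assertion, I would first recall that, under the Isaacs condition (Hypothesis \ref{Isaacs0}) and by Remark \ref{R-+}, one has $H^{0,-} \equiv H^{0,+}$, so that \eqref{HJB1} and \eqref{HJB2} reduce to a single equation. Existence of a quasi-strong solution is part of the standing hypotheses, and the first assertion shows that $V$ is precisely such a solution, since $V = v$. It then remains to establish uniqueness within the class of quasi-strong solutions to which Theorem \ref{Verification3} applies, i.e. those satisfying Hypothesis \ref{3.7}; note that the selection maps $z_i^\star$ depend only on the Hamiltonian and not on the particular solution, so they are common to all such candidates.

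For uniqueness I would argue as follows. Let $v_1$ and $v_2$ be two quasi-strong solutions satisfying the hypotheses of Theorem \ref{Verification3}. Applying the first assertion to each separately gives $v_1(t,x) = V^-(t,x) = V^+(t,x)$ and $v_2(t,x) = V^-(t,x) = V^+(t,x)$ for every $(t,x)$. The decisive observation is that $V^+$ and $V^-$ are defined solely through the game data---the state equation \eqref{state1} and the payoff \eqref{J0} via Definition \ref{Values}---and are therefore the same functions regardless of which solution is used. Consequently $v_1 = V^\pm = v_2$, and every admissible quasi-strong solution coincides with $V$. The only genuine subtlety is this conceptual point: all the analytic effort (the chain rule of Theorem \ref{representation}, the Fundamental Lemma \ref{FundamLemma1}, and the saddle-point estimates) has already been spent in proving Theorem \ref{Verification3}, so no further computation is required and the argument reduces to recognizing that the value functions are intrinsic to the game and hence solution-independent.
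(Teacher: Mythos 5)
Your proposal is correct and follows essentially the same route as the paper, which derives the corollary directly by chaining item 2 ($J(t,x;z^\star_1,z^\star_2)=v(t,x)$) with item 4 ($J(t,x;z^\star_1,z^\star_2)=V^+(t,x)=V^-(t,x)$) of Theorem \ref{Verification3}. Your elaboration of the uniqueness part---that any quasi-strong solution satisfying Hypothesis \ref{3.7} must equal the intrinsic, solution-independent value functions $V^\pm$, and is therefore unique within that class---is exactly the reasoning the paper leaves implicit.
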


If the diffusion is non-degenerate and $H_{CV}$ is continuous with respect to the controls, the statement of Theorem \ref{Verification3} translates
into Corollary \ref{cor_ver_3} below.
In this case, taking into account Proposition \ref{equiv},
it is not necessary to suppose
the existence of functions $  z^\star_i, i = 1,2$ fulfilling
Hypothesis \ref{hyp_select}.
\begin{cor}\label{cor_ver_3}
  Let $t\in [0,T]$
  and assume the following.
  \begin{enumerate}
  \item   Hypothesis \ref{3.1} holds.
  \item The non-degeneracy condition \eqref{not-deg} holds.
    \item Isaacs' condition holds
      and $H_{CV} (s,x,p, u_1, u_2)$ is continuous in $(u_1,u_2)$.
        \item The validity of Hypothesis \ref{Path0}.
                 \item The existence of a  function
      $v$ fulfilling  Hypothesis \ref{3.7}.
   
    \end{enumerate}
    Then, there exists a couple $(z_1^\star, z_2^\star) \in \shz_1(t) \times \shz_2(t)$, 
    for which the same conclusions 1.--4. of Theorem \ref{Verification3} are true.
  \end{cor}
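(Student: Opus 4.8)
The plan is to deduce the statement directly from Theorem \ref{Verification3}, by checking that its hypotheses---most notably Hypothesis \ref{hyp:ex-uniq} and the existence of selection maps fulfilling Hypothesis \ref{hyp_select}---are automatically satisfied under the present, more concrete assumptions. No genuinely new analysis is needed; the work consists in feeding the right auxiliary results into the verification theorem.

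First I would record that Hypothesis \ref{hyp:BorelH} is in force: since $H^0_{CV}(s,x,p,u_1,u_2)$ is continuous in $(u_1,u_2)$, the Hamiltonians $H^{0,\pm}$ are Borel, as observed in Remark \ref{rmk:HamBorel}. Next I would verify Hypothesis \ref{hyp:ex-uniq}. Because the non-degeneracy condition \eqref{not-deg} holds together with Hypothesis \ref{3.1}, Proposition \ref{baseHypFeed}, item 1 (see also Remark \ref{rmk:Veret}), provides a unique strong solution of the state equation \eqref{state1} for every pair of Borel feedback controls $(z_1,z_2) \in \shz_1(t)\times\shz_2(t)$, fulfilling moreover Property \ref{moments3}. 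This is precisely Hypothesis \ref{hyp:ex-uniq}.

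Second, I would produce the saddle selection maps. By Proposition \ref{equiv}, item 1, the continuity of $H^0_{CV}$ in the control variables yields Borel functions $z_i^*: [0,T]\times\R^d\times\R^d \to U_i$, $i=1,2$, satisfying Hypothesis \ref{hyp_select}. Setting $z_i^\star(s,x) = z_i^*(s,x,\partial_x v(s,x))$ as in \eqref{feed_con_p}, and using that $v \in C^{0,1}$ so that $\partial_x v$ is continuous, the composition $(s,x)\mapsto z_i^\star(s,x)$ is Borel with values in $U_i$; hence $z_i^\star \in \shz_i(t)$ is an admissible feedback control.

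At this stage all the hypotheses of Theorem \ref{Verification3} are met: Hypotheses \ref{Isaacs0}, \ref{3.1}, \ref{hyp:ex-uniq} and \ref{Path0}, the existence of a function $v$ related to \eqref{HJB1} and fulfilling Hypothesis \ref{3.7}, and the selection maps $z_i^*$ fulfilling Hypothesis \ref{hyp_select}. Applying that theorem to the pair $(z_1^\star,z_2^\star)$ yields conclusions 1.--4. verbatim. I do not expect a serious obstacle here, since the substance is already packaged in Proposition \ref{baseHypFeed} and Proposition \ref{equiv}; the only point requiring a moment's care is confirming that the measurable selection, composed with the continuous gradient $\partial_x v$, remains an admissible feedback control in $\shz_i(t)$, which the Borel-measurability argument above guarantees.
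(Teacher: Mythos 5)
Your proof is correct and follows essentially the same route as the paper: invoke Proposition \ref{baseHypFeed} (non-degenerate case) to get Hypothesis \ref{hyp:ex-uniq}, then Proposition \ref{equiv}, item 1, to obtain the selection maps satisfying Hypothesis \ref{hyp_select}, and apply Theorem \ref{Verification3}. Your additional checks (Borel measurability of the Hamiltonians via Remark \ref{rmk:HamBorel}, and admissibility of $(s,x)\mapsto z_i^*(s,x,\partial_x v(s,x))$ as a Borel feedback control) are left implicit in the paper but are welcome details.
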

  \begin{proof}
   Proposition \ref{baseHypFeed} ensures the validity of Hypothesis
    \ref{hyp:ex-uniq}.
  By item 1. of Proposition \ref{equiv}, there exist Borel functions
  $  z_i^*: [0,T] \times \R^d \times  \R^d  \rightarrow
  U_i$, for $i \in  \{ 1,2\}$ verifying Hypothesis \ref{hyp_select}.
 Hence, the assumptions of Theorem \ref{Verification3} are satisfied.
\end{proof}
\begin{rem} \label{R326}
  Theorem \ref{Verification3} applies of course quite
  generally when $\sigma$ is non-degenerate.
  Nevertheless it
  can also be used (or adapted) in some cases, even
  when $\sigma$ is  degenerate.
We explain this below.
\begin{enumerate}
\item
  Let us fix $d = 1$ for simplicity.
  We suppose that
  $f(s,x,u_1,u_2) = \sigma^2(x) f_0(s,x,u_1,u_2)$,
  where in particular $\sigma$, defined in \eqref{coef_space}, does not depend on time, it only vanishes in a point
  and $f_0: [0,T] \times \R \times U_1 \times U_2 \rightarrow \R$
  is such that $\sigma f_0$ is  bounded. We also suppose that $\frac{1}{\sigma}$ is not integrable at infinity.
  Then Hypothesis \ref{3.1} is fulfilled, because of Proposition \ref{PLampertiZvonkin} below.
\item
  In a fully general degenerate case,
    if $f$ is uniformly Lipschitz in $(x,u_1,u_2)$, one could
    still establish a verification theorem similar to Theorem \ref{Verification3},
    taking into account the following modifications.
\begin{itemize}
\item    Hypothesis \ref{hyp:ex-uniq} is not  fulfilled in general.
  In this case we restrict (for $i = 1,2$),  $\shz_i(t)$ to the class
$\sha_i(t)$   of Lipschitz feedback control functions $z_i$.
  Now, item 2. of Proposition \ref{baseHypFeed} for every $
  z_i \in \sha_i(t), i = 1,2$, is verified.
  In particular, the (modified) Hypothesis \ref{hyp:ex-uniq} is fulfilled
  if we replace $\shz_i(t)$
  with $\sha_i(t)$.
\item We also need 
  the validity of
  item 1. of Hypothesis \ref{3.7}. 
  A typical example, where this holds true, can be formulated making use of Remark 4.2 of \cite{CR1}:
  in particular we require that $x \mapsto \sigma(t,x),
  x \mapsto H(t,x,\partial_x u(t,x))$ and $g$ are of class $C^1$,
  for every $t \in [0,T]$, $\partial_x \sigma$ is bounded
  and $\partial_xg, \partial_xH(t,x,\partial_x u(t,x))$
  have polynomial growth uniformly with respect to $t$.
\end{itemize}

\end{enumerate}
\end{rem}
\begin{prop} \label{PLampertiZvonkin}
Let 
$b: [0,T] \times \R$
be a bounded Borel function, $x_0 \in \R$, 
and $\sigma: \R \rightarrow \R$ be
  a function of class $C^1$ with bounded derivative,  such that the following holds. 
\begin{enumerate}
\item $\sigma(x_0) = 0$;
\item $\sigma(x) \neq 0 $ for any $x \neq x_0$;
\item $\sigma b$ is bounded;
\item $$\int_{x_0 +1}^\infty \frac{1}{\sigma(y)}dy = \int_{-\infty}^{x_0 -1}\frac{1}{\sigma(y)}dy = + \infty.$$
\end{enumerate}
Let $t \in [0,T[$.
Then,
for any $x_1 \in \R$,
the SDE
\begin{equation}\label{SDE1}
\left\{
\begin{array}{ll}
 dX_s =   \sigma^2(X_s)b(s,X_s) ds +\sigma(X_s) dW_s,\\
 X_t = x_1,
\end{array}
\right.
\end{equation}
admits strong existence and pathwise uniqueness.
\end{prop}
\begin{proof}
  We fix $t=0$, for simplicity.
  Existence in law is a consequence of Girsanov's theorem, setting $\tilde W_s = W_s + \int_0^s b(r,X_r)\sigma(X_r) dr$. Uniqueness in law also follows by Girsanov.

Let us discuss pathwise uniqueness. Without losing the generality, assume $x_0 = 0$. Suppose first $x_1 = 0$.
Then $X_t \equiv 0$ is a solution. Given any other solution, by uniqueness in law, its law has to be concentrated at $x_0=0$.
So pathwise uniqueness follows.
Suppose $x_1 \neq 0$, for instance $x_1 >0$. Define 
\begin{equation}
F(x) = \int_{x_1}^x \frac{1}{\sigma(y)} dy,\qquad x>0.
\end{equation}
By hypothesis,  $ F: ]0,\infty[ \mapsto \R$ is a $C^1$-diffeomorphism, with   $F'(x) = \frac{1}{\sigma(x)}$ and $F''(x) = -\frac{\sigma'}{\sigma^2}(x)$. 
Moreover, since $\sigma$ is Lipschitz, for every $x>0$, it  holds $\sigma(x) = |\sigma(x)- \sigma(0)| \leq C x$, for some constant $C$.
So $\int_0^\eta \frac{1}{\sigma(y)} dy = + \infty$ for any $\eta >0$.
By It\^o's formula, setting $Y_s= F(X_s)$, we get
$$ Y_s  = W_s  +\int_0^s  (b \sigma - \sigma')(r,F^{-1}(Y_r)) dr, $$
at least until $X$ reaches $x_0 = 0$.
However, this will a.s. never happen. Otherwise, let $\tau$ be
the first  hitting time of $X$ at zero;
then $\lim_{s\ra \tau-} Y_s = +\infty$.
Now,
$Y_s$ admits pathwise uniqueness and strong existence by  \cite{veretennikov1982}, Theorem 6.
Hence, we cannot have $\lim_{t\ra \tau} Y_s = + \infty$. At this point, $X$ lives in $]0,\infty[$ and is the unique solution. 

\end{proof}

\begin{proof}[Proof of Theorem \ref{Verification3}]
 Let $v$ be a solution to the lower Bellman-Isaacs equation \eqref{HJB1}, or equivalently \eqref{HJB2}.
 Applying Lemma \ref{FundamLemma1} to a generic couple
$(z_1,z_2) \in \shz_1(t) \times \shz_2(t)$,
borrowing the notation of \eqref{ystsol}, i.e. $y(r)=y(r;t,x,z_1,z_2)$,
we have
\begin{align}
 &\tilde{J}(t,x; z_1,z_2) =  v(t,x)  + M_T(z_1,z_2)  \label{2.3_0} \\
  &\qquad + \int_t^T \left(H_{CV}(r,y(r),
                            \partial_x v(r,y(r)), z_1(r,y(r)),z_2(r,y(r))) \right.  \nonumber \\
& \qquad \qquad  \left. - \sup_{u_1 \in U_1} \inf_{u_2\in U_2} H_{CV}(r,y(r),\partial_x v(r,y(r)), u_1,u_2) \right) dr, \nonumber 
\end{align}
where $M_T(z_1,z_2)$ is a mean-zero  square integrable r.v.

We apply \eqref{2.3_0} to $(z^{\star}_1,z_2)$ so that, for $y(r)=y(r;t,x,z^\star_1,z_2),$
\begin{align} 
&\tilde{J}(t,x; z^{\star}_1,z_2)  = v(t,x)  +   M_T(z_1^{\star}, z_2) \nonumber \\
& \qquad   + \int_t^T \left( H_{CV}(r,y(r), \partial_x v(r,y(r)), z^{\star}_1(r,y(r)),z_2(r,y(r)))  \right. \nonumber \\
  &\qquad  \left. - \sup_{u_1 \in U_1} \inf_{u_2\in U_2} H_{CV}(r,y(r),\partial_x v(r,y(r)), u_1,u_2) \right) dr   \label{eq_2_3_4} \\
  & \qquad \geq v(t,x)+    M_T(z_1^{\star}, z_2) \nonumber \\
&  \qquad  + \int_t^T \inf_{u_2\in U_2} \left( H_{CV}(r,y(r), \partial_x v(r,y(r)), z^{\star}_1(r,y(r)),u_2)   \right.
  \nonumber \\
  &\qquad  \left. - \sup_{u_1 \in U_1} \inf_{u_2\in U_2} H_{CV}(r,y(r),\partial_x v(r,y(r)), u_1,u_2) \right) dr. \label{eq_2_3_5}
\end{align}
By the second equality of \eqref{if_2} in Hypothesis \ref{hyp_select} related to
$(z^\star_1, z^\star_2)$,
the term inside the integral of \eqref{eq_2_3_5} vanishes.
Therefore
\begin{equation}\label{eq_2_3_3}
\tilde{J}(t,x; z^{\star}_1,z_2)  \geq  v(t,x) + M_T(z_1^{\star}, z_2).
\end{equation}
It follows that $\tilde{J}(t,x, z^{\star}_1,z_2) $ is quasiintegrable, i.e.
its expectation is well-defined: in particular it belongs to
$]-\infty, +\infty]$ since the other terms are integrable.
This obviously implies item 1.(a).
Taking expectation of \eqref{eq_2_3_3}
\begin{equation}\label{eq_2_3_3a}
J(t,x; z^{\star}_1,z_2)   \geq  v(t,x).
\end{equation}
By analogue arguments we can establish the proof of item 1.(b)
and 
\begin{equation}\label{eq_2_3_5a}
  J(t,x; z_1, z^{\star}_2)  \leq v(t,x).
\end{equation}
Inequalities \eqref{eq_2_3_3a} and \eqref{eq_2_3_5a} prove also that 
\begin{equation}\label{eq_2_3_5b}
  J(t,x; z^{\star}_1, z^{\star}_2)  = v(t,x),
\end{equation}
which
proves item 2.

Moreover, by \eqref{eq_2_3_5b}, \eqref{eq_2_3_3a} and \eqref{eq_2_3_5a},
we also have
\begin{equation}\label{eq_2_3_5e}
   J(t,x; z_1, z^\star_2)  \leq J(t,x; z^\star_1, z^{\star}_2)  \leq   J(t,x; z^\star_1, z_2)
\end{equation}
and item 3. is proved.
Item 4. is a direct consequence of Proposition
\ref{prop:verification} below.

\end{proof}

\begin{rem} \label{RVerifHamadene}
  In the case of two players, the conclusion of the verification theorem \cite{Hamadene3}
  is the same as ours. Nevertheless our assumptions are different.
\begin{enumerate}
\item The assumptions of \cite{Hamadene3} imply that
  $J(t,x, z_1,z_2)$ is well-defined for every $z_1 \in \shz_1(t), z_2 \in \shz_2(t)$.
  In our case, it could not be always the case.
\item
  In the hypothesis of Theorem \ref{Verification3} and
  Corollary \ref{cor_ver_3} we do not necessarily assume  the following additional
hypothesis that appear in \cite{Hamadene3}:
$ p \mapsto H_{CV}(s,x,p, z^\star_1(s,x,p) ,z^\star_2(s,x,p))$ is continuous for any fixed $(s, x)$, see \cite{Hamadene3}, Assumption A3, item (ii).
\item Our context includes the possibility for  $\sigma$ to be degenerated.
\end{enumerate}

Our methodology, alternatively, proves the existence of Nash equilibrium
supposing the existence of $C^{0,1}$-quasi-strong solutions
of the Bellman-Isaacs PDEs.
\end{rem}
\begin{prop} \label{prop:verification}
  Let $t\in [0,T], x \in \R^d$.
 Suppose the existence of a couple $(z^{\star}_1,z^{\star}_2)$ which 
is a saddle point for the game.

Then the payoff functional evaluated at  $(z^\star_1,z^\star_2)$ is equal to both the upper and
  lower value of the game, i.e. $J(t,x;z^\star_1,z^\star_2) = V^+(t,x) = V^-(t,x)$.
  In particular the game admits a value.

  \end{prop}

  \begin{proof}
We recall that, by Definition \ref{Values},
$$V^-(t,x) = \sup_{z_1\in \shz_1(t)} \inf_{z_2\in \shz_2(t)} J^-(t,x; z_1,z_2)$$
and observe that, trivially, we have
\begin{equation}\label{item4_1}
 \inf_{z_2\in \shz_2(t)} \sup_{z_1\in \shz_1(t)} J^-(t,x; z_1,z_2)   \leq \sup_{z_1\in \shz_1(t)} J^-(t,x; z_1,z_2^\star).
\end{equation}
By Remark \ref{RSaddle}
we have
\begin{equation}\label{item4_3a}
  \sup_{z_1\in \shz_1(t)} J^-(t,x; z_1,z_2^\star)   \le J(t,x; z^{\star}_1,z^{\star}_2)
  \le \inf_{z_2\in \shz_2(t)} J^-(t,x; z_1^\star,z_2).
\end{equation}
  Therefore, using \eqref{item4_1} and \eqref{item4_3a}, we obtain
\begin{equation}\label{item4_4}
  \inf_{z_2\in \shz_2(t)} \sup_{z_1\in \shz_1} J^-(t,x; z_1,z_2) 
  \le J(t,x; z^{\star}_1,z^{\star}_2) \le
   \inf_{z_2\in \shz_2(t)} J^-(t,x; z_1^\star,z_2)
 \end{equation}
and  trivially we have
\begin{equation}\label{item4_6}
 \inf_{z_2\in \shz_2(t)} J^-(t,x; z_1^\star,z_2) \leq \sup_{z_1\in \shz_1(t)}  \inf_{z_2\in \shz_2(t)}  J^-(t,x; z_1,z_2).
\end{equation}
By \eqref{item4_4}
and \eqref{item4_6} we obtain 
\begin{equation}\label{item4_7}
  \inf_{z_2\in \shz_2(t)} \sup_{z_1\in \shz_1(t)} J^-(t,x; z_1,z_2)  
   \le J(t,x; z^{\star}_1,z^{\star}_2) \le
    \sup_{z_1\in \shz_1(t)}  \inf_{z_2\in \shz_2(t)}  J^-(t,x; z_1,z_2).
  \end{equation}
Since
\begin{equation*}
 \sup_{z_1\in \shz_1(t)}  \inf_{z_2\in \shz_2(t)}  J^-(t,x; z_1,z_2) \leq \inf_{z_2\in \shz_2(t)} \sup_{z_1\in \shz_1(t)} J^-(t,x; z_1,z_2),
\end{equation*}
we have proved
\begin{equation}\label{Esupinf-}
  \sup_{z_1\in \shz_1(t)}  \inf_{z_2\in \shz_2(t)}  J^-(t,x; z_1,z_2) = \inf_{z_2\in \shz_2(t)} \sup_{z_1\in \shz_1(t)} J^-(t,x; z_1,z_2)
  = J(t,x; z^*_1,z^*_2).
\end{equation}
A similar argument allows to establish
\begin{equation}\label{Esupinf+}
  \sup_{z_1\in \shz_1(t)}  \inf_{z_2\in \shz_2(t)}  J^+(t,x; z_1,z_2) = \inf_{z_2\in \shz_2(t)} \sup_{z_1\in \shz_1(t)} J^+(t,x; z_1,z_2)
  = J(t,x; z^*_1,z^*_2).
\end{equation}
Hence we have proved that
$$ V^-(t,x) = J(t,x, z^{\star}_1,z^{\star}_2) = V^+(t,x),$$
which concludes the proof of the proposition.
\end{proof}

\section{The case of control theory}

\label{S5}

As mentioned at the end of the Introduction, the techniques
described in previous sections allow also to establish
a verification theorem for stochastic control problem,
which generalizes Theorem 4.9 of \cite{rg1}.
This is the object of Theorem \ref{verificationC},
which follows from Lemma \ref{FundamLemmaC}.
That corollary is a consequence of a result very close to Lemma \ref{FundamLemma1}
and practically extends
Lemma 4.10 in \cite{rg1}.

We start with the basic assumptions on the coefficients.
\begin{hypothesis}\label{3.1Control}
  As in \eqref{coef_space}, $\sigma$ and $f$ are Borel functions.
  Remark that $f:[0,T]\times\R^d\times U \rightarrow \R^d$ depends here only on a single control.
  

 Moreover there exists $K > 0$ such that $\forall x, x_1, x_2 \in \R^d, t \in [0,T]$, 
  $\forall u \in U$,  the properties below hold.
  \begin{enumerate}
    \item $f$ is continuous in $x$ for every $(t,u)$.
\item  $ \langle f(t,x_1,u)- f(t,x_2,u), x_1-x_2 \rangle + ||\sigma(t,x_1)-\sigma (t,x_2)||^2 \leq K|x_1-x_2|^2 $.
\item $ \vert f(t,x,u) \vert + \Vert \sigma(t,x) \Vert  \leq K (1+|x|)$.
\end{enumerate}      
\end{hypothesis}

We now describe our optimal control problem.
Similarly to  Section \ref{Intro},
let us fix a stochastic basis
$(\Omega, \shf, (\shf_s) _{s\in [0,T]}, \P) $ satisfying the usual conditions, a finite dimensional Hilbert space,
say $\R^d$ that will be the state space, a finite dimensional Hilbert space, say $\R^m$ (the noise space), one compact set
$U \subseteq \R^k$ (the control space). We will deal with a fixed horizon problem so that we fix $T\in ]0, \infty[$ at the beginning.
$W$ is a
$(\shf_s)_{s\in [0,T]}$-$d$-dimensional Brownian motion.

Given an initial time and state $(t,x)\in [0,T]\times \R^d$,
the state equation is
\begin{equation}\label{stateC}
\left\{
\begin{array}{l l}
dy(s)= f(s,y(s),z(s)) ds + \sigma (s,y(s)) dW_s, \\
 y(t)= x.
\end{array}\right. 
\end{equation}
The process $\shz(t) \ni z :[t,T] \times \Omega \mapsto U$ is the control
processes, where $\shz(t)$ is the set of \textit{admissible control processes}, that is $(\shf_s)_{s\in [t,T]}$-progressively measurable processes taking values in $U$. We remark that, here, the set $\shz(t)$ is a set of processes and it is not anymore constituted by
functions defined on $[0,T] \times \R^d$.
Adopting an analogous formulation as for the game
theory part, given a function $\tilde z$ as ''feedback'' control,
$z(s) = \tilde z(s,y(s))$
would be a control in our sense.
Therefore, by a language abuse,
the class of admissible controls in this section is larger.
For this reason,  Hypothesis \ref{3.1Control}
 looks less general than Hypothesis \ref{3.1} formulated in the game theory setting.

By Theorem 1.2 in \cite{KR}, for every $z \in \shz(t)$,
there is a unique solution
 to \eqref{stateC} denoted by 
\begin{equation}\label{ystC}
y(s;t,x,z) \ \text{or} \ y(s), \ s\in [t,T], x\in \R^d, z\in \shz(t).
\end{equation}
We observe that, by 
Theorem 4.6 of \cite{KR}, $y$ fulfills
a moments inequality practically identical to the one
in Property \ref{moments3}.

The second hypothesis, concerning the running and terminal costs, is an adaptation
of Hypothesis \ref{Path0}.

\begin{hypothesis}\label{Path0Control}
  Let $l: [0,T]\times \R^d \times U \rightarrow \R, \
  g : \R^d \rightarrow \R $ be Borel functions. Moreover, for any
$t \in [0,T]$ we assume
  that, for all $z \in \shz(t)$, the function $s \mapsto l(s, y(s), z(s))$ is integrable in $[t,T]$, $\omega$ a.s. In particular $ -\infty  < \int_t^T l(s, y(s), z(s)) ds  < \infty$ a.s.
\end{hypothesis}

The payoff function  is defined as
\begin{equation}\label{JC}
 J(t,x;z)= \E(\tilde{J}(t,x;z)),
\end{equation}
provided previous expectation exists (otherwise it will be set to $+\infty$),
where 
\begin{equation}\label{TJC}
 \tilde{J}(t,x;z)=  \int_t^T l(s,y(s;t,x,z), z(s)) ds + g (y(T;t, x, z)),
\end{equation}
adopting very close notations to \eqref{J0} and below in Section \ref{Intro}
and in Hypothesis \ref{Path0} and Definition \ref{D14} in Section \ref{Section2}.
The objective is to minimize the payoff, hence the value function is
\begin{equation} \label{eq:ValueContr}
V(t,x) = \inf_{z \in \shz(t)}  J(t,x;z).
\end{equation}

\begin{dfn}\label{OptimalC}
Let $t\in [0,T]$. If there exists a control $z^\star \in \shz(t)$ such that $J(t,x; z^\star) = V(t,x)$ for any $x\in \R^d$, we say that the control $z^\star$ is optimal for the problem \eqref{stateC} and \eqref{JC}. 
\end{dfn}

The current value Hamiltonian is defined,
 for $(s,x,p,u) \in [0,T] \times \R^d \times \R^d \times U$, as
\begin{equation*}
  H_{CV}(s,x,p,u) = \langle f(s,x,u),p\rangle + l(s,x,u),
\end{equation*}
and the minimum value Hamiltonian (supposed to be Borel)
is
\begin{equation}\label{HamC}
H(s,x,p)= \inf_{u\in U} H_{CV}(s,x,p,u). 
\end{equation}
Defining formally the operator $\shl$ as
\begin{equation*}
\shl u(s,x) = \partial_s u (s,x)
+ \frac{1}{2} Tr [\sigma^\top(s,x) \partial_{xx}u(s,x) \sigma(s,x)],
\end{equation*}
it is possible to write the HJB equation
associated with 
problem \eqref{stateC} and \eqref{JC} as 
\begin{equation}\label{HJBC}
\left\{
\begin{array}{l l}
 \shl v (s,x) + H (s,x,\partial_x v(s,x)) =0, \\
v(T,x)= g(x).
\end{array}\right.
\end{equation}
We will consider 
 quasi-strong and quasi-strict solutions for the HJB equation as in Definitions \ref{strong1} and \ref{strict1}.

 We remark that, 
 in the game theory setting, we were using similar notations on the coefficients but there
 $f$ and $l$ were functions of  four variables  $(s,x,u_1,u_2)\in [t,T]\times \R^d \times U_1 \times U_2$,
while here they only depend  on  three variables   $(s,x,u)\in [t,T]\times \R^d \times U$.

The proof of the lemma  below
can be done following exactly the same lines as those
of Lemma \ref{FundamLemma1}.
\begin{lem}\label{FundamLemmaC}
  We assume Hypotheses  \ref{3.1Control} and \ref{Path0Control}. We also suppose the existence of a function $v$
  such that
  satisfying Hypothesis \ref{3.7},
with $U_1 \times U_2$ (resp. $(u_1,u_2)$) replaced by $U$ (resp. $u$),
   where \eqref{HJB1} (resp. \eqref{HJB2}) is replaced by \eqref{HJBC}.

   Then, $\forall (t,x) \in [0,T] \times \R^d$, $\forall z \in \shz(t)$, setting $y(r)= y(r;t,x,z)$, as in \eqref{ystC}, we have
\begin{eqnarray}\label{17}
\tilde{J}(t,x;z) &=& v(t,x)+   \int_t^T \left(H_{CV} (r,y(r), \partial_x v(r,y(r),z(r)) - H(r,y(r), \partial_x v(r,y(r))) \right)dr \nonumber \\
&+& M_T, 
\end{eqnarray}
where $M_T$ is a mean-zero (square-integrable) r.v.
and $\tilde{J}$ is defined in \eqref{TJC}.

\end{lem}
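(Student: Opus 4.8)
The plan is to reproduce, essentially verbatim, the argument used for Lemma \ref{FundamLemma1}, with the single progressively measurable control $z \in \shz(t)$ playing the role of the feedback pair $(z_1,z_2)$ and the three-variable coefficients $f_1(s,x,u)$, $l(s,x,u)$ replacing their four-variable analogues. Since $\partial_s v$ and $\partial_{xx} v$ need not exist, one cannot invoke the classical It\^o formula; instead I would apply the chain rule of Theorem \ref{representation} with $u = v$. The first step is to identify the progressively measurable field driving the state dynamics: setting $f(\omega,s,x) := b(s,x) + f_1(s,x,z(s,\omega))$, the process $y(\cdot) = y(\cdot;t,x,z)$ solves $dy(s) = f(s,y(s))ds + \sigma(s,y(s))dW_s$, and $f$ is a.s. locally bounded thanks to the linear growth in Hypothesis \ref{3.1Control} and the compactness of $U$. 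Note that $f - b = f_1(\cdot,\cdot,z(\cdot))$, which is exactly the term appearing in the alternative conditions of Theorem \ref{representation}.

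Next I would check the integrability requirements. By Hypothesis \ref{3.7} (adapted to \eqref{HJBC}), $v$ is a quasi-strong solution of $\shl_0 v(s,x) = -H^0(s,x,\partial_x v(s,x))$, $v(T,x) = g(x)$, and $\partial_x v$ has polynomial growth. Since $y$ satisfies the moment bound of Property \ref{moments3} (valid here by Theorem 4.6 of \cite{KR}), the square-integrability condition \eqref{L21} holds for $u = v$. One of the two alternatives of Theorem \ref{representation} is then guaranteed by Hypothesis \ref{3.7}: either the uniform-on-compacts convergence of $(\partial_x v_n)$ yields \eqref{16} with $S_r = y(r)$, or the boundedness of $\sigma^{-1} f_1$ yields the Novikov condition \eqref{Novikov}, recalling $f - b = f_1(\cdot,\cdot,z(\cdot))$.

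With the hypotheses verified, applying Theorem \ref{representation} to $u = v$ gives
\begin{equation*}
g(y(T)) = v(t,x) + \int_t^T \partial_x v(r,y(r)) \sigma(r,y(r)) dW_r + \shb^S(v)_T,
\end{equation*}
where
\begin{equation*}
\shb^S(v)_T = \int_t^T \left( -H^0(r,y(r),\partial_x v(r,y(r))) + \partial_x v(r,y(r)) f_1(r,y(r),z(r)) \right) dr.
\end{equation*}
By Hypothesis \ref{Path0Control} the integral $\int_t^T l(r,y(r),z(r))dr$ is a.s. finite, so I may add it to both sides; grouping $\langle \partial_x v, f_1\rangle + l$ into $H^0_{CV}(r,y(r),\partial_x v(r,y(r)),z(r))$ produces exactly the integrand of \eqref{17}. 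Finally, Cauchy--Schwarz together with Property \ref{moments3} and the polynomial growth of $\partial_x v$ shows $\E\int_t^T |\partial_x v(r,y(r))\sigma(r,y(r))|^2 dr < \infty$, so that $M_s := \int_t^s \partial_x v(r,y(r))\sigma(r,y(r))dW_r$ is a mean-zero square-integrable martingale; recalling the definition \eqref{TJC} of $\tilde J$ closes the proof.

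The step requiring the most care, though it is not deep, is the verification that the driving field $f$ is an admissible (progressively measurable, a.s. locally bounded) input for Theorem \ref{representation}: unlike the feedback case of Lemma \ref{FundamLemma1}, here $f$ genuinely depends on $\omega$ through the open-loop process $z$, so one must exploit the progressive measurability of $z$ and the compactness of $U$ rather than the continuity of a feedback map. Everything else transfers mechanically from the proof of Lemma \ref{FundamLemma1}.
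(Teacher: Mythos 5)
Your proposal is correct and matches the paper's intent exactly: the paper proves Lemma \ref{FundamLemmaC} simply by remarking that it follows the same lines as Lemma \ref{FundamLemma1}, which is precisely what you carry out, including the checks of \eqref{L21}, the two alternative conditions of Theorem \ref{representation}, and the square-integrability of the martingale term. Your extra attention to the fact that $f(\omega,s,x) = b(s,x) + f_1(s,x,z(s,\omega))$ is now a genuinely random (progressively measurable, a.s. locally bounded) field is the right observation and is exactly why Theorem \ref{representation} was stated for random fields rather than deterministic coefficients.
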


We now state a   more general verification theorem than Theorem 4.9 of \cite{rg1}.

\medskip

\begin{thm}\label{verificationC}
    We assume Hypotheses  \ref{3.1Control} and \ref{Path0Control}. 
    We also suppose the existence of a function $v$ satisfying Hypothesis \ref{3.7},
with $U_1 \times U_2$ (resp. $(u_1,u_2)$) replaced with $U$ (resp. $u$),
   where \eqref{HJB1} (resp. \eqref{HJB2}) is replaced by \eqref{HJBC}.
    
  Let $t\in [0,T], x \in \R^d$.  Then we have the following.
\begin{enumerate}
 \item  For any $z\in \shz(t)$ the functional \eqref{JC} is well-defined. In particular the random variable
   $\tilde{J}(t,x,z):  \Omega \mapsto \R $ defined in \eqref{TJC}  is
quasiintegrable and its expectation is strictly
  greater than $-\infty$.
 \item  $v(t,x)\leq V(t,x)$.
\item If $z^\star \in \shz(t)$ satisfies (setting $y(r) = y(r;t,x,z^\star)$, as in \eqref{ystC})
\begin{equation}\label{optC}
 H(r,y(r), \partial_x v(r,y(r))) = H_{CV} (r,y(r), \partial_x v(r,y(r)),z^\star(r)),
\end{equation}
for a.e. $(r,x) \in [t,T] \times \R^d$, $\P$-a.s., then $z^\star$ is optimal in the sense of Definition \ref{OptimalC}. Moreover $v(t,x)= V(t,x)$ and $V(t,x)$ is finite.
\end{enumerate}
\end{thm}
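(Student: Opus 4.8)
The proof will rest entirely on the Fundamental Lemma for control, Lemma \ref{FundamLemmaC}, which provides the representation
\begin{equation}\label{repC}
\tilde J(t,x;z) = v(t,x) + \int_t^T \left(H^0_{CV}(r,y(r),\partial_x v(r,y(r)),z(r)) - H^0(r,y(r),\partial_x v(r,y(r)))\right) dr + M_T,
\end{equation}
where $M_T$ is a mean-zero square-integrable random variable. The whole strategy is to exploit the sign of the integrand. By the very definition \eqref{H0} of $H^0$ as an infimum over $u \in U$, for every $r$ and $\omega$ one has $H^0_{CV}(r,y(r),\partial_x v(r,y(r)),z(r)) \ge H^0(r,y(r),\partial_x v(r,y(r)))$, so the integrand in \eqref{repC} is nonnegative. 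This is the only structural fact I need, and it drives all three items.

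For item 1., the nonnegativity of the integrand together with \eqref{repC} gives $\tilde J(t,x;z) \ge v(t,x) + M_T$. Since $v(t,x)$ is a finite constant and $M_T$ is integrable (hence quasiintegrable from below), $\tilde J(t,x;z)$ is bounded below by an integrable random variable, so its negative part has finite expectation: this is precisely lower-semiintegrability, and $J(t,x;z) = \E(\tilde J(t,x;z))$ is well-defined (possibly $+\infty$), which is the definition used in \eqref{JC}. For item 2., I take expectations in \eqref{repC}; the martingale term vanishes, $\E M_T = 0$, and using the nonnegativity of the integrand I obtain $J(t,x;z) \ge v(t,x)$ for every admissible $z \in \shz(t)$. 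Taking the infimum over $z$ yields $v(t,x) \le \inf_{z} J(t,x;z) = V(t,x)$.

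For item 3., suppose $z^\star$ satisfies the pointwise optimality condition \eqref{optC}, which asserts that the integrand in \eqref{repC} vanishes $dr \otimes d\P$-a.e. along the trajectory $y(r)=y(r;t,x,z^\star)$. Then \eqref{repC} collapses to $\tilde J(t,x;z^\star) = v(t,x) + M_T$, which is integrable with $\E M_T = 0$, so $J(t,x;z^\star) = v(t,x)$. Combined with item 2., this gives $V(t,x) \le J(t,x;z^\star) = v(t,x) \le V(t,x)$, forcing equality throughout: hence $v(t,x) = V(t,x) = J(t,x;z^\star)$, the value is finite, and $z^\star$ attains the infimum, so it is optimal in the sense of Definition \ref{OptimalC}.

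The argument is essentially a sign chase once the representation \eqref{repC} is in hand, so there is no serious analytic obstacle remaining at the level of this theorem; the real work has already been discharged in Lemma \ref{FundamLemmaC} (and upstream in the chain rule Theorem \ref{representation}). The one point requiring mild care is the measurable/integrability bookkeeping in item 1.: one must confirm that lower-semiintegrability is exactly what the lower-boundedness by $v(t,x)+M_T$ delivers, and that this is consistent with the convention in \eqref{JC} of setting $J = +\infty$ when the expectation fails to exist as a finite number. A second minor point is to verify that the a.e. condition \eqref{optC}, stated for $(r,x)$, indeed implies the vanishing of the integrand along the specific random trajectory $y(\cdot;t,x,z^\star)$; this uses that $y$ has an absolutely continuous occupation measure, or more simply that \eqref{optC} is assumed to hold $\P$-a.s. for a.e. $r$, which is precisely the formulation given.
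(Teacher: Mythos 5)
Your proposal is correct and follows essentially the same route as the paper's proof: apply Lemma \ref{FundamLemmaC}, use the nonnegativity of the integrand $H^0_{CV}-H^0$ (which holds since $H^0$ is the infimum over $U$) to get $\tilde J \ge v(t,x)+M_T$, take expectations for items 1 and 2, and observe that \eqref{optC} makes the integral vanish for item 3. The only difference is that you spell out the lower-semiintegrability bookkeeping and the final squeeze $V \le J(t,x;z^\star)=v(t,x)\le V$ slightly more explicitly than the paper does, which is harmless.
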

\begin{proof}
  Applying Lemma \ref{FundamLemmaC} for a $z \in \shz(t)$, we obtain 
\begin{align}\label{2.3_00}
  \tilde{J}(t,x, z) &= v(t,x) + \int_t^T \left(H_{CV}(r,y(r;t,x,z(r)), \partial_x v(r,y(r;t,x,z), z(r))  \right. \nonumber \\
                             &\qquad - \left. H(r,y(r;t,x,z),\partial_x v(r,y(r;t,x,z)) \right)dr  + M_T(z),
 \end{align}
where $M_T(z)$ is a mean-zero (square-integrable) r.v.

Obviously, the integral in  \eqref{2.3_00} is always greater or equal than zero, hence, 
\begin{align}\label{2.3_01}
\tilde{J}(t,x, z) &\geq v(t,x)  + M_T(z).
\end{align}
Taking the expectation, it follows that, for a generic $z \in \shz(t)$
$$ J(t,x;z) \geq v(t,x) $$
and therefore item 1. follows.
Then, taking the infimum over $z$ allows to prove item 2.

Concerning item 3., if $z^\star$ satisfies \eqref{optC}, the integral 
in \eqref{2.3_00} vanishes and so $v(t,x) = J(t,x;z^\star)$.
Consequently $v(t,x) = V(t,x)$ and the result is finally proved.
\end{proof}

\section*{Acknowledgments}

Both authors are grateful to  the Associate Editor, the Editor and
especially the Referees,
for stimulating them to drastically improve the previously submitted
versions.
The research of the second named author
was partially supported by the  ANR-22-CE40-0015-01 project (SDAIM).

\addcontentsline{toc}{chapter}{Bibliography}
\bibliographystyle{plain}
\bibliography{../../../../../BIBLIO_FILE/biblioCarlo}

\end{document}